\definecolor{labelkey}{rgb}{0,0,1}
\definecolor{Red}{rgb}{0.7,0,0.1}
\definecolor{Green}{rgb}{0,0.7,0}
\numberwithin{equation}{section}
\newtheorem{Thm}{Theorem}[section]
\newtheorem{Prop}[Thm]{Proposition}
\newtheorem{Cor}[Thm]{Corollary}
\newtheorem{Rmk}[Thm]{Remark}
\newtheorem*{Thm*}{Theorem}
\newcommand{\Z}{\mathbb{Z}}
\DeclareMathOperator{\esssup}{ess\ sup}
\newcommand{\til}[1]{{\tilde{#1}}}
\newcommand{\Sob}[2]{\lVert#1\rVert_{#2}}
\newcommand{\goesto}{\rightarrow}
\newcommand{\smod}{\setminus}
\newcommand{\al}{\alpha}
\newcommand{\be}{\beta}
\newcommand{\De}{\Delta}
\newcommand{\gam}{\gamma}
\newcommand{\s}{\sigma}
\newcommand{\kap}{\kappa}
\newcommand{\Om}{\Omega}
\newcommand{\bdy}{\partial}
\newcommand{\lb}{\langle}
\newcommand{\rb}{\rangle}
\newcommand{\ts}{\tilde{\sigma}}
\newcommand{\tG}{\tilde{G}}
 \title[On reconstruction of unknown external forces via low-mode observations in 2D NSE]{On the reconstruction of unknown driving forces from low-mode observations in the 2D Navier-Stokes Equations}
 \author{Vincent R. Martinez}
\begin{document}
\maketitle

\begin{abstract}
This article is concerned with the problem of determining an unknown source of non-potential, external time-dependent perturbations of an incompressible fluid from large-scale observations on the flow field. A relaxation-based approach is proposed for accomplishing this, which makes use of a nonlinear property of the equations of motions to asymptotically enslave small scales to large scales. In particular, an algorithm is introduced that systematically produces approximations of the flow field on the unobserved scales in order to generate an approximation to the unknown force; the process is then repeated to generate an improved approximation of the unobserved scales, and so on. A mathematical proof of convergence of this algorithm is established in the context of the two-dimensional Navier-Stokes equations with periodic boundary conditions under the assumption that the force belongs to the observational subspace of phase space; at each stage in the algorithm, it is shown that the model error, represented as the difference between the approximating and true force, asymptotically decreases to zero in a geometric fashion provided that sufficiently many scales are observed and certain parameters of the algorithm are appropriately tuned. 
\end{abstract}

\vspace{1em}

{\noindent \small {\it {\bf Keywords: inferring unknown external force, parameter estimation, model error, system identification, data assimilation, feedback control, nudging, synchronization, Navier-Stokes equations, convergence, sensitivity}
  } \\
  {\it {\bf MSC 2010 Classifications:} 35Q30, 35B30, 93B30, 35R30, 76B75} 
  }

\section{Introduction}

In the derivation of any model, parameters arise that capture intrinsic properties of the phenomenon of interest. In the case of modelling a turbulent, incompressible fluid flow via the Navier-Stokes equations (NSE), assuming a constant density, the two relevant parameters are essentially the kinematic viscosity of the fluid and the external force. For instance, in eddy diffusivity models, turbulent viscosity coefficients must be specified \cite{BerselliIliescuLaytonBook}. In practice, these parameters must be empirically determined. On the other hand, in proposing any turbulent closure, one inevitably commits model errors. These errors may themselves then be modeled as a body force, whose exact form in terms of the mean field is unknown.  

In the current work, we propose an algorithm for determining all large-scale features of an external driving force in the two-dimensional incompressible NSE (2D NSE) down to the observation scale. In practice, such an algorithm can be used to filter model errors that are represented as external driving forces. In our idealized set-up, the fluid occupies a periodic domain, $\Om=[0,2\pi]^2$, the density of the fluid is normalized to unity, the kinematic viscosity of the fluid is known perfectly, but the external driving force is not. We point out that the problem of determining the viscosity based on direct observation of the velocity field was recently studied in \cite{CialencoGlattHoltz2011, DiLeoniClarkMazzinoBiferale2018, CarlsonHudsonLarios2020, Martinez2022, BiswasHudson2023}, where estimators for the viscosity were proposed, their consistency and asymptotic normality were established \cite{CialencoGlattHoltz2011}, convergence analyses for viscosity-recovery algorithms were carried out \cite{Martinez2022}, and numerical tests were performed \cite{DiLeoniClarkMazzinoBiferale2018, CarlsonHudsonLarios2020}. The problem of multi-parameter recovery in chaotic dynamical systems was studied in \cite{CarlsonHudsonLariosMartinezNgWhitehead2021, PachevWhiteheadMcQuarrie2021}, while the recent work \cite{CarlsonLarios2021} studies the sensitivity of the 2D NSE to changes in the viscosity, as well as its implications for a certain downscaling algorithm for data assimilation.

Generally speaking, one of the objectives of this article is to study the extent to which external forces can be determined based on error-free, but partial observations of flow field through a practically implementable algorithm. In our ideal set-up, we will assume that we have access to a time series of the velocity field, but only through sufficiently small, but nevertheless finitely many, length scales. Ultimately, we introduce an algorithm for reconstructing large-scale features of the unknown force and establish its convergence under the assumption that the force acts only on length scales that are directly observed, that is to say, that the force belongs to the span of the observational field.

To be more precise, we recall that the 2D NSE on $\Om=[0,2\pi]^2$ is given by
    \begin{align}\label{eq:nse:intro}
        \bdy_tu+(u\cdotp\nabla) u=\nu\De u-\nabla p+f,\quad \nabla\cdotp u=0,
    \end{align}
where the kinematic viscosity, $\nu>0$, is given and fixed, and the time-series, $\{P_Nu(t)\}_{t\in[0,T]}$, is known, up to some time $T>0$, where $P_N$ is the $L^2$-orthogonal projection onto Fourier wave-numbers corresponding to $|k|\leq N$, and $u\cdotp\nabla=u^j\bdy_j$, where repeated indices indicates summation over those indices. However, the external force, $f$, is time-independent, but unknown. The scalar pressure field is denoted by $p$; upon taking the divergence of \eqref{eq:nse:intro}, one sees that $p$ is determined entirely by $u, f$ via the Poisson equation $-\De p=\bdy_i\bdy_j(u^iu^j)+\bdy_jf^j$. We will assume that $u$, $p$, and $f$ are both mean-free and periodic over $\Om$, and that $f$ is divergence-free. The main result is that for forces satisfying $P_Nf=f$, where $N$ is sufficiently large, there exists a sequence of times $\{t_n\}_n$ and sequence of approximating forces,  $\{f_n\}_n$, depending only on the observations $\{P_Nu(t)\}_{t\geq0}$ such that $f_n$ converges to $f$. We will in fact address the more general case of time-dependent forces.

The motivating idea for the algorithm we propose is based on the notion of ``asymptotic completeness" for nonlinear dissipative systems. For systems that possess this property, it asserts that having direct (observational) access to a sufficiently rich, but nevertheless finite-dimensional, set of scales, is enough to asymptotically determine the unobserved scales. This property was rigorously shown to hold for \eqref{eq:nse:intro} by Foias and Prodi \cite{FoiasProdi1967} in the case when access to sufficiently many Fourier modes of the velocity field is available; for the case of three-dimensions, the reader is referred to the works \cite{CheskidovDai2017, BiswasPrice2021}. Specifically, given two solutions $u_1, u_2$ of \eqref{eq:nse:intro} corresponding to external forces $f_1, f_2$, it was shown that there exists $N\geq1$, depending on $\nu$ and $f_1,f_2$, but only through their size, such that $P_N(u_1(t)-u_2(t))\goesto0$ and $f_1(t)-f_2(t)\goesto0$ as $t\goesto\infty$ together imply $u_1(t)-u_2(t)\goesto0$ as $t\goesto\infty$. In this case, it is then said that the Fourier modes corresponding to wavenumbers $|k|\leq N$ are \textit{determining} for the system \eqref{eq:nse:intro}; the smallest such number, $N$, is referred to as the number of \textit{determining modes}. In light of this result, one sees that the problem of inferring an unknown force may be possible to solve provided that sufficiently many modes are observed and that one identifies an algorithm that asymptotically reconstructs higher-modes that can subsequently be made use of to approximate the force. 

To see the main difficulty that must be overcome in doing so, suppose that one is given access to $P_Nu(t)$, for $t\geq t_0$. Then a na\"ive, but reasonable first approximation to the low-modes of the force may be given by simply evaluating the nonlinear differential operator determined by \eqref{eq:nse:intro} along $P_Nu$. This yields
    \begin{align}\label{def:force:initial}
        \bdy_tP_Nu-\nu\De P_Nu+P_N((P_Nu\cdotp\nabla)P_Nu)+\nabla p_N=:f_0,
    \end{align} 
where $p_N$ is found by enforcing $f_0$ to be divergence-free. On the other hand, by applying the low-pass filter $P_N$ to \eqref{eq:nse:intro}, one also obtains
    \begin{align}\label{def:nse:low:pass}
        \bdy_tP_Nu-\nu\De P_Nu+P_N((P_Nu\cdotp\nabla)P_Nu)+\nabla p_N=P_Nf+\mathcal{R}_N,
    \end{align}
where $\mathcal{R}_N$ denotes the Reynolds stress, defined as
    \begin{align}\label{def:stress}
        \mathcal{R}_N:=P_N((P_Nu\cdotp\nabla)P_Nu)-P_N((u\cdotp\nabla)u)+\nabla p_N-\nabla P_Np.
    \end{align}
One therefore has the following relation    
    \begin{align}\label{eq:rel:reynolds}
        \mathcal{R}_N=f_0-P_Nf.
    \end{align}
In particular, determination of $P_Nf$ is equivalent to determination of $\mathcal{R}_N$. The fundamental difficulty of the problem arises from the elementary fact that $\mathcal{R}_N$ depends on both $P_Nu$ and $(I-P_N)u$, which is tantamount to the closure problem of turbulence. In comparison, if \eqref{eq:nse:intro} were replaced by the linear heat equation, then observation of $P_Nu$ along a trajectory of $u$ provides an exact reconstruction for $P_Nf$ along that same trajectory. Thus, any resolution of this difficulty must address a way to reconstruct high-mode information from low-mode data. In \cref{sect:algorithm}, we introduce an algorithm that, under suitable conditions, accomplishes this and systematically decrements the ``Reynolds stress'' at each stage in a geometric fashion, yielding a convergent scheme for approximating the force on the low-modes. From this point of view, the proposed scheme may be viewed as a ``nonlinear filtering'' of large-scale error.

For systems that possess an \textit{inertial manifold},  reconstruction of small scales from large scales would be possible. Indeed, the existence of an inertial manifold implies a strong form of enslavement of scales as it asserts the existence of a map $\Phi$ such that $P_Nu|_{t=t_0}\mapsto (I-P_N)u|_{t\geq t_0}$, that is, $u(t)=P_Nu(t)+\Phi(P_Nu(t_0))(t)$, for all $t\geq t_0$. In other words, knowledge of low-modes at a single time is sufficient to determine the high-mode behavior at all future times. The inertial manifold would then be determined by the graph of $\Phi$ (see \cite{FoiasTemam1985, FoiasSellTemam1988}). However, the existence of such a map for \eqref{eq:nse:intro} remains an outstanding open problem. One of the main points of this article is that one need only rely on the weaker property of asymptotic completeness to reconstruct sufficiently high-mode components of the state variables in order to eventually recover the low-modes of the forcing.

There are at least two ways available in the literature for doing, one due to Foias, Jolly, Kravchenko, and Titi in \cite{FoiasJollyKravchenkoTiti2012} and another by Azouani, Olson, and Titi in \cite{AzouaniOlsonTiti2014}. The first construction \cite{FoiasJollyKravchenkoTiti2012} allows one to encode projections of solutions on the global attractor of \eqref{eq:nse:intro} as traveling wave solutions to some infinite-dimensional ordinary differential equation. However, it effectively requires one to solve the evolution equation of \eqref{eq:nse:intro} corresponding to the high-modes \cite{FoiasJollyLithioTiti2017}. The second method introduces downscaling algorithm for data assimilation \cite{AzouaniOlsonTiti2014}, in which large-scale observations are exogenously inserted into \eqref{eq:nse:intro} as a feedback-control term that serves to drive the corresponding solution of this modified system towards the reference solution, but only on large scales. By virtue of the Foias-Prodi property of determining modes, it was then shown that by tuning the strength of the feedback-control system appropriately, the generated approximating signal asymptotically synchronizes to the reference signal to which the large-scale observations correspond. It is the latter approach that we will make use of to systematically reconstruct high-mode information on the reference field. 

The remainder of the manuscript is organized as follows. In \cref{sect:notation}, we establish the notation and functional setting in which the result will be proved, as well as classical well-posednesss results that we will make use of. A derivation of the algorithm that reconstructs the external force is presented in \cref{sect:algorithm}. We provide formal statements of the convergence results in \cref{sect:results}, then outline their proof in \cref{sect:outline}. The main step is to reduce the analysis of the convergence to establishing suitable ``sensitivity estimates". These estimates are proved in \cref{sect:sensitivity}. We finally supply rigorous proofs of the main results in \cref{sect:proof}. Technical auxiliary results are relegated to \cref{sect:app:apriori}.

\section{Notation and Functional Setting}\label{sect:notation}
Let $\Om=[0,2\pi]^2$ and $L^p_\s=L^p_\s(\Om)$, for $p\in[1,\infty]$, denote the space of $p$-integrable (in the sense of Lebesgue), mean-free, solenoidal vector fields over $\Om$, which are periodic in each direction; its norm is given by
    \begin{align}\label{def:Lp:norm}
        \Sob{u}{L^p}^p:=\int_{\Om} |u(x)|^pdx,\quad \text{for}\ p\in[1,\infty),\quad
        \Sob{u}{L^\infty}:=\esssup_{x\in\Om}|u(x)|,
    \end{align}
where $\esssup$ denotes the essential supremum. We let $H^k_\s=H^k_\s(\Om)$ denote the space of periodic, mean-free, solenoidal vector fields over $\Om$ whose weak derivatives (in the sense of Sobolev) up to order $k$ belong to $L^2_\s$; its norm is given by
    \begin{align}\label{def:Hk:norm}
        \Sob{u}{H^k}^2:=\sum_{|\al|\leq k}\int_{\Om}|\bdy^\al u(x)|^2dx,
    \end{align}
where $\al\in(\mathbb{N}\cup\{0\})^2$ is a multi-index. We will abuse notation and use $L^2_\s, H^k_\s$ to denote the corresponding spaces for scalar functions as well; in this case, $|u|$ is interpreted as absolute value, rather than Euclidean norm.

We will make use of the functional form of the Navier-Stokes equations, which is given by
    \begin{align}\label{eq:nse}
        \frac{d}{dt}u+\nu Au+B(u,u)=Pf,
    \end{align}
where 
    \[
        Au=-P\De u\quad\text{and}\quad B(u,v)=P(u\cdotp\nabla) v,
    \]
and $P$ denotes the Leray-Helmholtz projection, that is, the orthogonal projection onto divergence-free vector fields; we refer to $A$ as the Stokes operator. Note that by orthogonality, $\Sob{P}{L^2\goesto L^2}\leq 1$. Moreover
    \begin{align}\label{def:LP}
        \widehat{Pu}(n):=\hat{u}(n)-\frac{n\cdotp\hat{u}(n)}{|n|^2}n,\quad n\in\Z^2\smod\{(0,0)\},
    \end{align}
where $\hat{u}(n)$ denotes the Fourier coefficient of $u$ corresponding to wavenumber $n\in\Z^2$. Also, powers of $A$ can be defined spectrally via
    \begin{align}\label{def:A}
        \widehat{A^{k/2}u}(n):=|n|^k\hat{u}(n).
    \end{align}
Hence, $P$ commutes with $\De$ in the setting of periodic boundary conditions. Recall that for $u\in H^1_\s$ the Poincar\'e inequality states
    \begin{align}\label{eq:poincare}
        \Sob{u}{L^2}\leq \Sob{A^{1/2} u}{L^2}=\Sob{A^{1/2}u}{L^2}.
    \end{align}
It follows that for each integer $k\geq1$, there exists a constant $c_k>0$ such that
    \begin{align}\label{est:equiv:Sob}
        c_k^{-2}\Sob{u}{H^k}^2\leq \sum_{|\al|=k}\int_\Om|\bdy^\al u(x)|^2dx\leq c_k^2\Sob{u}{H^k}^2,
    \end{align}
whenever $u\in H^k_\s$. In particular, by the Parseval identity, it follows that there also exist constants $c_k>0$ such that
     \begin{align}\label{est:equiv:A}
        c_k^{-1}\Sob{u}{H^k}\leq \Sob{A^{k/2}u}{L^2}\leq c_k\Sob{u}{H^k},
    \end{align}
for all $u\in H^k_\s$.

Given $t_0\geq0$ and $f\in L^\infty(t_0,\infty;L^2_\s)$, we define the Grashof-type number by
    \begin{align}\label{def:Grashof}
        \tG:=\frac{\sup_{t\geq t_0}\Sob{f(t)}{L^2}}{\kap_0^2\nu^2},
    \end{align}
where $\kap_0=(2\pi)/L$, where $L$ is the linear length of the domain; since $\Om=[0,2\pi]^2$, we see that $\kap_0=1$. We note that the Grashof number is traditionally denoted by $G$, that is, undecorated, when the force is \textit{independent of time}. Since we allow for time-dependence in the force, we will distinguish between the two notations by making use of tilde. See, for instance \cite{Balci_Foias_Jolly_2010}, where this distinction is also maintained.

Observe that $Pf=f$, for any $f\in L^2_\s$. One has the following classical results regarding the existence theory for \eqref{eq:nse} (\cite{ConstantinFoias1988, TemamBook1997, TemamBook2001, FoiasManleyRosaTemamBook2001}).

\begin{Thm}\label{thm:nse:wellposed}
Let $t_0\geq 0$ and $f\in L^\infty(t_0,\infty;L^2_\s)$. For all $u_0\in H^1_\s$, there exists a unique $u\in C([t_0,T];H_\s^1)\cap L^2(t_0,T;H^{2})$ satisfying \eqref{eq:nse}, for all $T>0$, such that $\frac{d}{dt}u\in L^2(t_0,T;L^2_\s)$ and
    \begin{align}\label{est:nse:H1}
        \Sob{A^{1/2}u(t)}{L^2}^2\leq \Sob{A^{1/2} u(t_0)}{L^2}^2e^{-\nu (t-t_0)}+\nu^2\tilde{G}^2(1-e^{-\nu(t-t_0)}),
    \end{align}
for all $t\geq t_0\geq0$.
\end{Thm}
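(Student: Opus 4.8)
The plan is to carry out the classical Galerkin construction, adapted to the periodic, mean-free, solenoidal setting (full details are in the references cited above). Let $\{w_j\}_{j\geq1}\subset L^2_\s$ be the orthonormal basis of eigenfunctions of the Stokes operator $A$, let $P_m$ be the $L^2$-orthogonal projection onto $\mathrm{span}\{w_1,\dots,w_m\}$, and consider the finite-dimensional system
\[
    \tfrac{d}{dt}u_m+\nu Au_m+P_mB(u_m,u_m)=P_mf,\qquad u_m(t_0)=P_mu_0 .
\]
Since $v\mapsto P_mB(v,v)$ is quadratic, hence locally Lipschitz on the finite-dimensional space $P_mL^2_\s$, standard ODE theory produces a unique maximal solution $u_m$ on an interval $[t_0,T_m)$; the a priori bounds below will show $T_m=\infty$.

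First I would record the energy estimates. Pairing the Galerkin system with $u_m$ and using the cancellation $\langle B(v,v),v\rangle=0$ valid for divergence-free $v$, then Cauchy--Schwarz, Young's inequality, and Poincar\'e \eqref{eq:poincare}, yields $\tfrac{d}{dt}\Sob{u_m}{L^2}^2+\nu\Sob{u_m}{L^2}^2\leq \nu^{-1}\Sob{f}{L^2}^2$, whence a uniform bound in $L^\infty(t_0,T;L^2_\s)\cap L^2(t_0,T;H^1_\s)$ (and, in particular, global existence $T_m=\infty$). The decisive step is the enstrophy estimate, where the special structure of the two-dimensional periodic setting enters: the identity $\langle B(v,v),Av\rangle=0$ for smooth divergence-free periodic $v$ — equivalently, the vanishing of the enstrophy flux $\int_\Om(u\cdotp\nabla\w)\,\w\,dx$ in the vorticity formulation — causes the nonlinear term to drop out when one pairs the Galerkin system with $Au_m$, leaving
\[
    \tfrac12\tfrac{d}{dt}\Sob{A^{1/2}u_m}{L^2}^2+\nu\Sob{Au_m}{L^2}^2=\langle f,Au_m\rangle .
\]
Cauchy--Schwarz and Young on the right-hand side, followed by \eqref{eq:poincare} in the form $\Sob{Au_m}{L^2}^2\geq\Sob{A^{1/2}u_m}{L^2}^2$, give $\tfrac{d}{dt}\Sob{A^{1/2}u_m}{L^2}^2+\nu\Sob{A^{1/2}u_m}{L^2}^2\leq\nu^{-1}\sup_{t\geq t_0}\Sob{f(t)}{L^2}^2$, and Gr\"onwall's lemma produces exactly \eqref{est:nse:H1} for $u_m$, recalling $\nu^{-2}\sup_{t\geq t_0}\Sob{f(t)}{L^2}^2=\nu^2\tG^2$ since $\kap_0=1$. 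Thus $\{u_m\}$ is bounded in $L^\infty(t_0,T;H^1_\s)\cap L^2(t_0,T;H^2)$; substituting this back into the equation and using the classical two-dimensional interpolation bound $\Sob{B(v,v)}{L^2}\lesssim\Sob{v}{L^2}^{1/2}\Sob{A^{1/2}v}{L^2}\Sob{Av}{L^2}^{1/2}$ shows $\{\tfrac{d}{dt}u_m\}$ is bounded in $L^2(t_0,T;L^2_\s)$.

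It remains to pass to the limit and prove uniqueness. By Banach--Alaoglu, along a subsequence $u_m\overset{*}{\rightharpoonup}u$ in $L^\infty(t_0,T;H^1_\s)$, $u_m\rightharpoonup u$ in $L^2(t_0,T;H^2)$, and $\tfrac{d}{dt}u_m\rightharpoonup\tfrac{d}{dt}u$ in $L^2(t_0,T;L^2_\s)$; the Aubin--Lions--Simon lemma gives $u_m\to u$ strongly in $L^2(t_0,T;H^1_\s)$, which is precisely what is needed to pass to the limit in $B(u_m,u_m)$ tested against smooth divergence-free functions, the remaining linear terms passing by weak convergence. Hence $u$ solves \eqref{eq:nse}, and the Lions--Magenes lemma — applicable since $u\in L^2(t_0,T;H^2)$ with $\tfrac{d}{dt}u\in L^2(t_0,T;L^2_\s)$ — upgrades this to $u\in C([t_0,T];H^1_\s)$; with this regularity the enstrophy balance becomes rigorous for $u$, so \eqref{est:nse:H1} is inherited. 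For uniqueness I would take two solutions $u,v$, set $w=u-v$, test the difference equation with $w$, use $\langle B(v,w),w\rangle=0$, the decomposition $B(u,u)-B(v,v)=B(w,u)+B(v,w)$, the two-dimensional bound $|\langle B(w,u),w\rangle|\leq\Sob{w}{L^4}^2\Sob{\nabla u}{L^2}\lesssim\Sob{w}{L^2}\Sob{A^{1/2}w}{L^2}\Sob{A^{1/2}u}{L^2}$, and Young's inequality, arriving at $\tfrac{d}{dt}\Sob{w}{L^2}^2\lesssim\nu^{-1}\Sob{A^{1/2}u(t)}{L^2}^2\Sob{w}{L^2}^2$; since $t\mapsto\Sob{A^{1/2}u(t)}{L^2}^2$ is bounded, hence integrable, on $[t_0,T]$ by the estimate just established, Gr\"onwall forces $w\equiv0$. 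The only genuinely delicate points are the Aubin--Lions compactness used to pass to the limit in the nonlinearity and the appeal to the two-dimensional periodic orthogonality $\langle B(v,v),Av\rangle=0$, which is exactly what makes the clean, linear-looking bound \eqref{est:nse:H1} available; the rest is routine.
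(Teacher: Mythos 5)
Your proposal is correct and follows essentially the same route as the paper, which defers the existence/uniqueness construction to the classical references (Constantin--Foias, Temam, Foias--Manley--Rosa--Temam) — all of which use exactly the Galerkin scheme you describe — and derives \eqref{est:nse:H1} in its appendix by the same enstrophy balance, i.e., pairing with $Au$ and exploiting the two-dimensional periodic orthogonality $\langle B(u,u),Au\rangle=0$ before applying Cauchy--Schwarz, Poincar\'e, and Gr\"onwall. No gaps; your sharper use of Young's inequality even recovers the constant $\nu^2\tG^2$ exactly as stated.
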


From \eqref{est:nse:H1}, we see that for $t_0=0$ and all $t>0$ sufficiently large, depending on $\Sob{A^{1/2}u_0}{L^2}$, one has
    \begin{align}\label{def:rad:H1}
        \Sob{A^{1/2}u(t)}{L^2}\leq \sqrt{2}\nu \tG=:\til{c}_1\til{R}_1,\quad \til{c}_1=\sqrt{2}.
    \end{align}
Let $\til{B}_1$ denote the ball of radius $\tilde{R}_1$, centered at the origin in $L^2_\s$. Observe that $A^{1/2}u_0\in \tilde{B}_1$ implies  via \eqref{est:nse:H1} that $u(t;u_0,f)\in \tilde{B}_1$, for all $t\geq 0$, where $u(t;u_0,f)$ denotes the unique solution of \eqref{eq:nse} with initial data $u_0$ and external forcing $f$. Moreover, there exists a constant $\til{c}_2>0$ such that if $f\in L^\infty(0,\infty;H^1_\s)$ and  $Au_0\in\al\til{B}_2$, where $\al>0$ is arbitrary, $\til{B}_2$ denotes the ball of radius $\tilde{R}_2=\til{c}_2\nu(\tG+\ts_1)\tG$, centered at the origin in $L^2_\s$, and $\al\til{B}_2$ the same ball of radius $\al\til{R}_2$, then
    \begin{align}\label{def:rad:H2:gen}
    \Sob{Au(t)}{L^2}^2\leq (1+\al^2)\tilde{c}_2^2\nu^2(\ts_1+\tG)^2\tG^2,
    \end{align}
for all $t\geq0$, where $\ts_1$ denotes a ``shape factor" defined by
     \begin{align}\label{def:shape}
        \ts_1:=\frac{\sup_{t\geq t_0}\Sob{A^{1/2} f(t)}{L^2}}{\sup_{t\geq t_0}\Sob{f(t)}{L^2}}.
    \end{align}
In other words, $Au_0\in \al\til{B}_2$ implies $Au(t)\in(1+\al^2)^{1/2}\til{B}_2$, for all $t\geq0$. Observe that $\ts_1\geq1$ by Poincar\'e's inequality. In particular, if $Au_0\in\til{B}_2$, then 
    \begin{align}\label{est:rad:H2:gen}
        \Sob{Au(t)}{L^2}\leq \sqrt{2}\til{c}_2\nu(\tG+\ts_1)\tG=\sqrt{2}\til{R}_2.
    \end{align}
Bounds sharper than \eqref{est:rad:H2:gen} were established in \cite{DascaliucFoiasJolly2005} in the setting where $f$ was time-independent. In this particular case, one has that
    \begin{align}\label{def:rad:H2}
        \Sob{Au(t)}{L^2}\leq c_2\nu(\s_{1}^{1/2}+G)G=:R_2,
    \end{align}
holds for all $t\geq0$, for some $c_2>0$, provided that $u_0\in B_2$, the ball of radius $R_2$ in $H^2_\s$. Here, $G$ denotes the Grashof number, which is simply given by \eqref{def:Grashof} when $f$ is time-independent. Similarly, $\s_1$ is given by \eqref{def:shape} when $f$ is time-independent. For the sake of completeness, we supply a short proof of \eqref{def:rad:H2} in \cref{sect:app:apriori}.

\section{Description of the Algorithm}\label{sect:algorithm}

We consider the following feedback control system 
    \begin{align}\label{eq:nse:ng}
        \frac{d}{dt}v+\nu Av+B(v,v)=h-\mu P_N(v-u),
    \end{align}
where $h$, possibly time-dependent, is given. The well-posedness theory and synchronization properties of this model was originally developed in \cite{AzouaniOlsonTiti2014} for a more general class of observables, which includes projection onto finitely many Fourier modes as a special case, in \cite{BlomkerLawStuartZygalakis2013, BessaihOlsonTiti2015} in the setting of noisy observations, while the issue of synchronization in higher-order topologies was studied in \cite{BiswasMartinez2017, BiswasBrownMartinez2022}. In the idealized setting considered in this article, the existence, uniqueness results in \cite{AzouaniOlsonTiti2014} will suffice for our purposes. This is stated in the following theorem.

\begin{Thm}\label{thm:nse:ng:wellposed}
Let $t_0\geq0$ and $h\in L^\infty(t_0,\infty;L^2_\s)$. Let $u$ denote the unique solution of \eqref{eq:nse} corresponding to initial data $u_0\in H^1_\s$ guaranteed by \cref{thm:nse:wellposed}. There exists a constant $\til{c}>0$ such if $\mu,N$ satisfy
    \begin{align}\label{cond:mu:N:ng}
        \mu \leq \til{c}\nu N^2
    \end{align}
then given $v_0\in H^1_\s$, there exists a unique solution, $v$, to the initial value problem corresponding to \eqref{eq:nse:ng} such that
    \begin{align}
        v\in C([t_0,T];H^1_\s)\cap L^2(t_0,T; H^2_\s)\quad\text{and}\quad \frac{d}{dt}v\in L^2(t_0,T;L^2_\s),
    \end{align}
for any $T>0$.
\end{Thm}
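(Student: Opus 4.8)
The plan is to regard \eqref{eq:nse:ng} as a two-dimensional Navier--Stokes system driven by a known forcing, plus one extra linear term coming from the feedback, and to run the classical Galerkin construction. Since $u$ is the solution furnished by \cref{thm:nse:wellposed}, it satisfies \eqref{est:nse:H1}, so $g:=h+\mu P_Nu$ lies in $L^\infty(t_0,\infty;L^2_\s)$ and \eqref{eq:nse:ng} may be rewritten as
\[
\frac{d}{dt}v+\nu Av+B(v,v)+\mu P_Nv=g.
\]
Let $\Pi_m$ denote the $L^2_\s$-orthogonal projection onto the span of the first $m$ eigenfunctions of $A$, and consider the finite-dimensional system $\frac{d}{dt}v_m+\nu Av_m+\Pi_mB(v_m,v_m)+\mu\Pi_mP_Nv_m=\Pi_mg$ with $v_m(t_0)=\Pi_mv_0$; its right-hand side is a quadratic polynomial in $v_m$, so Picard--Lindel\"of yields a unique local-in-time solution $v_m$.

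The heart of the matter is a family of a priori bounds on $v_m$, uniform in $m$, on each interval $[t_0,T]$. Pairing the Galerkin equation with $v_m$, the nonlinear term vanishes by orthogonality and --- a distinctive feature of this model --- the feedback term is \emph{favorable}: since $P_N$ is a spectral projection it commutes with $A$ and is self-adjoint, so $\langle P_Nv_m,v_m\rangle=\|P_Nv_m\|_{L^2}^2\ge0$. Using the Poincar\'e inequality \eqref{eq:poincare} to absorb $\langle g,v_m\rangle$ into $\nu\|A^{1/2}v_m\|_{L^2}^2$ and applying Gronwall's inequality yields a bound for $v_m$ in $L^\infty(t_0,T;L^2_\s)\cap L^2(t_0,T;H^1_\s)$ in terms of $\|v_0\|_{L^2}$, $\sup_t\|g(t)\|_{L^2}$, $\nu$ and $T$. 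Pairing instead with $Av_m$ upgrades this to an $H^1$ bound: the feedback term again has the good sign, $\langle P_Nv_m,Av_m\rangle=\|A^{1/2}P_Nv_m\|_{L^2}^2\ge0$; $\langle g,Av_m\rangle$ is dispatched by Cauchy--Schwarz and Young's inequality; and the only genuinely nonlinear contribution, $\langle B(v_m,v_m),Av_m\rangle$, is controlled, exactly as in the classical two-dimensional theory, via the Ladyzhenskaya-type estimate $|\langle B(v,v),Av\rangle|\le c\|v\|_{L^2}^{1/2}\|A^{1/2}v\|_{L^2}\|Av\|_{L^2}^{3/2}$ followed by Young's inequality, leaving a remainder of the form $c\nu^{-3}\|v_m\|_{L^2}^2\|A^{1/2}v_m\|_{L^2}^2\cdot\|A^{1/2}v_m\|_{L^2}^2$. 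Since $\|A^{1/2}v_m\|_{L^2}^2\in L^1(t_0,T)$ by the first estimate, the uniform Gronwall lemma produces a bound for $v_m$ in $L^\infty(t_0,T;H^1_\s)\cap L^2(t_0,T;H^2_\s)$; reading $\frac{d}{dt}v_m$ off the equation and using the two-dimensional bound on $B(v_m,v_m)$ in $L^2_\s$ then bounds $\frac{d}{dt}v_m$ in $L^2(t_0,T;L^2_\s)$. In particular the Galerkin solutions are global.

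With these bounds the remaining steps are standard. One extracts a subsequence converging weak-$*$ in $L^\infty(t_0,T;H^1_\s)$, weakly in $L^2(t_0,T;H^2_\s)$, with time derivatives weakly in $L^2(t_0,T;L^2_\s)$; by the Aubin--Lions lemma the convergence is strong in $L^2(t_0,T;H^1_\s)$, which suffices to pass to the limit in the quadratic term and identify a limit $v$ solving \eqref{eq:nse:ng}, with the asserted regularity following from the bounds together with the Lions--Magenes interpolation lemma (which also gives $v\in C([t_0,T];H^1_\s)$ and hence $v(t_0)=v_0$). For uniqueness, I would test the equation for the difference $w=v_1-v_2$ of two solutions against $w$: the feedback term contributes $\mu\|P_Nw\|_{L^2}^2\ge0$, one has $\langle B(v_1,w),w\rangle=0$, and $|\langle B(w,v_2),w\rangle|\le c\|w\|_{L^2}\|A^{1/2}w\|_{L^2}\|A^{1/2}v_2\|_{L^2}\le\tfrac{\nu}{2}\|A^{1/2}w\|_{L^2}^2+c\nu^{-1}\|A^{1/2}v_2\|_{L^2}^2\|w\|_{L^2}^2$, so since $\|A^{1/2}v_2\|_{L^2}^2$ is integrable in time, Gronwall with $w(t_0)=0$ forces $w\equiv0$. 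I expect the only real work to be the $H^1$ a priori estimate, i.e.\ absorbing $\langle B(v,v),Av\rangle$ into the dissipation as in the standard 2D Navier--Stokes analysis; the feedback term, being a damping of the low modes, only helps. Note that the condition \eqref{cond:mu:N:ng} is not actually required for the well-posedness claim when the observable is the sharp projection $P_N$ --- it enters only in the general-interpolant version of the argument in \cite{AzouaniOlsonTiti2014}, and later in the synchronization analysis --- so I would simply carry it along for consistency with the results quoted from \cite{AzouaniOlsonTiti2014}.
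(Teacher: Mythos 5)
Your proof is correct and is essentially the standard Galerkin argument from \cite{AzouaniOlsonTiti2014}, which is exactly what the paper relies on: the paper does not prove this theorem itself but cites that reference. Your side remark is also accurate --- for the sharp spectral projection $P_N$ the feedback term is genuinely nonnegative in both the energy and enstrophy balances, so \eqref{cond:mu:N:ng} plays no role in well-posedness and only matters for general interpolant observables and for the later synchronization estimates.
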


The feedback control system \eqref{eq:nse:ng} was originally conceived as a way to assimilate observations on the flow field into the equations of motion in order to reconstruct the unobserved scales of motion. There is a considerable body of work studying the extent to which this is possible in various situations in hydrodynamics such as Rayleigh-B\'enard convection \cite{FarhatJollyTiti2015, FarhatLunasinTiti2016b, AltafTitiGebraelKnioZhaoMcCabe2017, FarhatLunasinTiti2017a, FarhatJohnstonJollyTiti2018, FarhatGlattHoltzMartinezMcQuarrieWhitehead2020, CaoJollyTitiWhitehead2021}, turbulence \cite{AlbanezNussenzveigLopesTiti2016, FarhatLunasinTiti2017b, LariosPei2020, ClarkDiLeoniMazzinoBiferale2020, ChenLiLunasin2021, CaoGiorginiJollyPakzad2022, ZaunerMonsMarquetLeclaire2022}, geophysical fluids \cite{FarhatLunasinTiti2016c, JollyMartinezTiti2017, AlbanezBenvenutti2018, JollyMartinezOlsonTiti2019, Pei2019, DesamsettiDasariLangodanTitiKnioHoteit2019}, dispersive equations \cite{JollySadigovTiti2015, JollySadigovTiti2017}, as well as various numerical analytical and computational studies \cite{GeshoOlsonTiti2016, FoiasMondainiTiti2016, BlocherMartinezOlson2018, IbdahMondainiTiti2019, LariosRebholzZerfas2019, CelikOlsonTiti2019, Garcia-ArchillaNovo2020, Garcia-ArchillaNovoTiti2020, DiegelRebholz2022}.

Given $h$, we may thus obtain from \eqref{eq:nse:ng} an approximate reconstruction of the high-modes of $u$ via $Q_Nv=(I-P_N)v$. We therefore propose the following algorithm: Let $h=f_0$ denote the initial guess for the forcing field, defined for all $t\geq t_0$, for some fixed initial time $t_0\geq0$, and let $v^0$ to be an arbitrary initial state; $f_0$ is considered to be the approximation at stage $n=0$ that is prescribed by the user and may, in fact, be chosen arbitrarily. Suppose that $P_Nf_0=f_0$. Then at stage $n=1$, we consider
    \begin{align}\label{eq:step:1}
        \frac{d}{dt}v_1+\nu Av_1+B(v_1,v_1)=f_0-\mu P_N(v_1-u),\quad v_1(t_0)=v_1^0,\quad t\in I_0:=[t_0,\infty).
    \end{align}
We define the first approximation to the flow field by
    \begin{align}\label{def:vel:1}
    u_1=P_Nu+Q_Nv_1, \quad \text{for}\  t\in I_0.
    \end{align}
By design, $u_1$ will relax towards $u$ after a transient period, $\rho_1:=t_1-t_0$, that is proportional to the relaxation time-scale, $\mu^{-1}$, where $t_1\gg t_0$, but only up to an error of size $O(g_0)$, where $g_0:=f_0-f$, which represents the ``model error.'' Only after this period has transpired will we extract the first approximation to $f$ via the formula
    \begin{align}\label{eq:update:1}
        f_1(t):=\frac{d}{dt}P_Nu_1+\nu AP_Nu_1+P_NB(u_1,u_1),\quad \text{for all}\  t\in I_1:=[t_0+\rho_1,\infty),\ \text{for some}\ \rho_1\gg 0.
    \end{align}
Observe that $P_Nf_1=f_1$. To obtain new approximations in subsequent stages, we proceed recursively: Suppose that at stage $n-1$, a force, $f_{n-1}=P_Nf_{n-1}$ over $t\in I_{n-1}:=[t_{n-2}+\rho_{n-1},\infty)$, has been produced, where the relaxation period satisfies $\rho_{n-1}\gg 0$, and an arbitrary initial state, $v_n^0$, has been given. Then consider
    \begin{align}\label{eq:step:n}
        \frac{d}{dt}v_n+\nu Av_n+B(v_n,v_n)=f_{n-1}-\mu P_N(v_n-u),\quad v_n(t_{n-1})=v_n^0,\quad t\in I_{n-1}.
    \end{align}
Let $u_n=P_Nu+Q_Nv_n$, for $t\in I_{n-1}$, and define the approximation to the force at stage $n$ by
    \begin{align}\label{eq:update:n}
        f_n(t):=\frac{d}{dt}P_Nu_n+\nu AP_Nu_n+P_NB(u_n,u_n),\quad \text{for all}\ t\in I_n:=[t_{n-1}+\rho_n,\infty),\ \text{for some}\ \rho_n\gg 0.
    \end{align}
This procedure produces a sequence of forces $f_1|_{I_1}, f_2|_{I_2}, f_3|_{I_3},\dots$ that approximates $f$ on time intervals $I_n$ whose left-hand endpoints are increasing with $n$. In particular, the sequence $\{f_n|_{I_n}\}_n$ \textit{asymptotically} approximates $f$.

The key step to ensuring convergence of the generated sequence $\{f_n\}_{n\geq1}$ to the true forcing, $f$, is to control model errors, $g_n:=f_n-f$, at each stage, in terms of the synchronization errors, $w_n$, which, in turn, are controlled by the model error \textit{from the previous stage}; this will be guaranteed to be the case after transient periods of length $\rho_n:=t_n-t_{n-1}$, which allows relaxation in \eqref{eq:step:1} to take place. However, it will be shown that this convergence can only be guaranteed to occur on the ``observational subspace'' $P_NL^2_\s$, for $N$ sufficiently large. Indeed, a crucial observation at this point is that if $f=P_Nf$, then $f_n=P_Nf_n$, for all $N\geq1$. We refer the reader to \cref{rmk:marchioro} and \cref{rmk:high} for additional remarks on the basic expectations for recovering force from low-mode data and the underlying limitations of this algorithm. We refer the reader to \cref{rmk:relaxation} for a more detailed discussion on the size of the transient periods, $\rho_n$.

\begin{Rmk}\label{rmk:f0}
We remark that the first guess, $f_0$, need not be arbitrary and can be chosen to be
    \begin{align}\label{def:force:initial:rmk}
        f_0:=\frac{d}{dt}P_Nu+\nu AP_Nu+P_NB
        (P_Nu,P_Nu),
    \end{align}
as suggested in \eqref{def:force:initial}, where $t_{0}>0$ is chosen sufficiently large so that $u(t_0)$ is contained in an absorbing ball for \eqref{eq:nse}.  Similarly, the initial states, $v_n^0$, at each stage need not be arbitrary. Since $\{P_Nu(t)\}_{t\geq t_0}$ is assumed to be given, one can initialize the system governing $v_n$ at time $t=t_{n-1}$ with $v_n^0=P_Nu(t_{n-1})$ at each stage. These natural choices would presumably aid in the implementation of the proposed algorithm; we refer the reader \cref{rmk:relaxation} for additional remarks related to this point.
\end{Rmk}

Before outlining the proofs, we rigorously state the main results of the article.

\section{Statements of Main Results}\label{sect:results}
Let $A^{1/2}u_0\in \tilde{B}_1$ and $Au_0\in\tilde{B}_2$ and $f\in   C([t_0,\infty);L^2_\s)\cap L^\infty(t_0,\infty;L^2_\s)$, for some $t_0\geq0$, where $\tilde{B}_1 ,\tilde{B}_2$ were defined as in \cref{sect:notation}. Let $u$ denote the unique strong solution of \eqref{eq:nse} corresponding to forcing $f$ and initial data $u_0$ guaranteed by \cref{thm:nse:wellposed} corresponding to initial velocity $u_0$ and external forcing $f$. Given $t_0\geq 0$, let $\gam_0$ denote the initial relative error defined by
    \begin{align}\label{def:error:rel}
    \gam_0:=\frac{\sup_{t\geq t_0}\Sob{f_0(t)-f(t)}{L^2}}{\sup_{t\geq t_0}\Sob{f(t)}{L^2}},
    \end{align}
where $f_0$ is a user-prescribed initial guess for the force which satisfies $f_0\in C([t_0,\infty);L^2_\s)\cap L^\infty(t_0,\infty;L^2_\s)$. Recall that the assumptions on the initial data imply (via \eqref{est:nse:H1}, \eqref{def:rad:H2}) that $A^{1/2}u(t)\in\tilde{B}_1$ and $Au(t)\in2\tilde{B}_2$, for all $t\geq0$.

\begin{Thm}\label{thm:low}
Suppose that $f=P_Nf$, for some $N\geq1$, and that $\{P_Nu(t)\}_{t\geq t_0}$ is given. There exists a positive constant $c$, depending on $\gam_0$, such that for any $\be\in(0,1)$, if $N\geq 1$ satisfies
    \begin{align}\label{cond:mu:N:low}
         \be^{2}N^2>c(\ts_1+\tG)\tG^2,
    \end{align}
then there exists a choice for the tuning parameter $\mu$ and increasing sequence of times  $t_{n-1}\leq t_n$, for $n\geq2$, such that
    \begin{align}\label{est:error:recursion}
        \sup_{t\geq t_n}\Sob{f_n(t)-f(t)}{L^2}\leq \be\left(\sup_{t\geq t_{n-1}}\Sob{f_{n-1}(t)-f(t)}{L^2}\right),
    \end{align}
for all $n\geq1$, where $f_n=P_Nf_n$, $f_n\in C([t_0,\infty);L^2_\s)\cap L^\infty(t_0,\infty;L^2_\s)$, and each $f_n$ is given by \eqref{eq:update:n}.
\end{Thm}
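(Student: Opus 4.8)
The plan is to split \eqref{est:error:recursion} into two estimates: (i) a purely nonlinear identity expressing the new model error $g_n := f_n - f$ in terms of the synchronization error $w_n := v_n - u$ (this eliminates the troublesome time derivative in \eqref{eq:update:n}), and (ii) a ``sensitivity estimate'' controlling $w_n$ on the large-time regime by the previous model error $g_{n-1} := f_{n-1} - f$. Composing these and tuning $\mu$ then yields the geometric decay, and an induction on $n$ (maintaining the uniform bound $\sup_t\Sob{g_{n-1}(t)}{L^2}\le\gam_0\sup_t\Sob{f(t)}{L^2}$, cf. \eqref{def:error:rel}) completes the proof.

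\textbf{Step 1 (reduction to a sensitivity estimate).} Since $u_n = P_N u + Q_N v_n$, we have $P_N u_n = P_N u$; applying $P_N$ to \eqref{eq:nse} and using $f = P_N f$ gives $\frac{d}{dt}P_N u + \nu A P_N u + P_N B(u,u) = f$. Subtracting this from \eqref{eq:update:n} yields the exact identity
\[
    g_n = P_N\big(B(u_n,u_n) - B(u,u)\big) = P_N\big(B(\eta_n,u) + B(u,\eta_n) + B(\eta_n,\eta_n)\big), \qquad \eta_n := u_n - u = Q_N w_n ,
\]
so the $\frac{d}{dt}$–term cancels identically and only a nonlinear remainder in the small quantity $\eta_n$ survives. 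Standard 2D product/interpolation inequalities, together with the uniform a priori bound $\sup_t\Sob{Au(t)}{L^2}\lesssim\nu(\ts_1+\tG)\tG$ from \eqref{est:rad:H2:gen} and $\sup_t\Sob{A^{1/2}u(t)}{L^2}\lesssim\nu\tG$ from \eqref{est:nse:H1}, then give, for $t$ in the relevant time range,
\[
    \Sob{g_n(t)}{L^2} \lesssim \mathcal{C}\,\Sob{A^{1/2}w_n(t)}{L^2} + \Sob{A^{1/2}w_n(t)}{L^2}^2 , \qquad \mathcal{C} = \mathcal{C}(\nu,\tG,\ts_1)\sim \nu(\ts_1+\tG)^{1/2}\tG ,
\]
where the size of $\mathcal{C}$ reflects $\Sob{u}{L^\infty}^2\lesssim\Sob{A^{1/2}u}{L^2}\Sob{Au}{L^2}$ (the quadratically small term $B(\eta_n,\eta_n)$ requires marginally more care in 2D — a logarithmic interpolation, or working one derivative higher — but is otherwise harmless). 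Everything is thus reduced to bounding $\sup_{t\ge t_n}\Sob{A^{1/2}w_n(t)}{L^2}$ by $\sup_{t\ge t_{n-1}}\Sob{g_{n-1}(t)}{L^2}$.

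\textbf{Step 2 (the sensitivity / synchronization estimate).} Subtracting \eqref{eq:nse} from \eqref{eq:step:n} gives the error equation
\[
    \frac{d}{dt}w_n + \nu A w_n + B(w_n,v_n) + B(u,w_n) + \mu P_N w_n = g_{n-1} .
\]
I would pair this with $A w_n$ and run the now-standard nudging argument: write $\mu P_N w_n = \mu w_n - \mu Q_N w_n$ and absorb $\mu\Sob{A^{1/2}Q_N w_n}{L^2}^2 \le \mu N^{-2}\Sob{A w_n}{L^2}^2$ into $\nu\Sob{A w_n}{L^2}^2$ using the well-posedness constraint $\mu\le\til c\,\nu N^2$ of \eqref{cond:mu:N:ng}; estimate the bilinear terms with 2D interpolation and Young, using the uniform bounds on $u$ and the analogous bounds for $v_n$ (valid with Grashof number $\lesssim(1+\gam_0)\tG$, since $f_{n-1} = f + g_{n-1}$ and $\sup_t\Sob{g_{n-1}(t)}{L^2}\le\gam_0\sup_t\Sob{f(t)}{L^2}$ by the inductive hypothesis — this is the source of the $\gam_0$–dependence of $c$); and bound the forcing term by $\langle g_{n-1},A w_n\rangle\le\Sob{g_{n-1}}{L^2}\Sob{A w_n}{L^2}$, absorbing $\Sob{A w_n}{L^2}^2$ again into the viscous term so that no power of $N$ is incurred from $g_{n-1}$. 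The outcome is a differential inequality
\[
    \frac{d}{dt}\Sob{A^{1/2}w_n}{L^2}^2 + (\mu - C_1)\Sob{A^{1/2}w_n}{L^2}^2 \le \frac{C_2}{\nu}\sup_{s\ge t_{n-1}}\Sob{g_{n-1}(s)}{L^2}^2 , \qquad t\ge t_{n-1},
\]
valid once $\mu\ge 2C_1$, with $C_1,C_2$ depending only on $\nu,\tG,\ts_1,\gam_0$. Choosing $\mu$ in the window $2C_1\le\mu\le\til c\,\nu N^2$ — nonempty precisely when $N$ is large, which is where \eqref{cond:mu:N:low} enters — and applying Gr\"onwall gives
\[
    \Sob{A^{1/2}w_n(t)}{L^2}^2 \le \Sob{A^{1/2}w_n(t_{n-1})}{L^2}^2 e^{-(\mu-C_1)(t-t_{n-1})} + \frac{C_2}{\nu(\mu-C_1)}\sup_{s\ge t_{n-1}}\Sob{g_{n-1}(s)}{L^2}^2 .
\]
Since $\Sob{A^{1/2}w_n(t_{n-1})}{L^2} = \Sob{A^{1/2}(v_n^0 - u(t_{n-1}))}{L^2}$ is finite (and uniformly bounded for the natural choice $v_n^0 = P_N u(t_{n-1})$), I would pick the transient length $\rho_n = t_n - t_{n-1}$ large enough that the first term is dominated by the second, so $\sup_{t\ge t_n}\Sob{A^{1/2}w_n(t)}{L^2}^2 \lesssim \tfrac{1}{\nu(\mu - C_1)}\sup_{t\ge t_{n-1}}\Sob{g_{n-1}(t)}{L^2}^2$.

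\textbf{Step 3 (closing the induction) and the main obstacle.} Inserting the bound of Step 2 into Step 1 and choosing $\mu\sim\til c\,\nu N^2$, the contraction factor is $\lesssim \mathcal{C}/\sqrt{\nu\mu}\sim(\ts_1+\tG)^{1/2}\tG / N$ plus a term $\lesssim\sup_t\Sob{g_{n-1}(t)}{L^2}/\sqrt{\nu\mu}$ that is absorbed using $\sup_t\Sob{g_{n-1}}{L^2}\le\gam_0\sup_t\Sob{f}{L^2}$ and enlarging $N$; under \eqref{cond:mu:N:low} (with the implicit constant $c = c(\gam_0)$ fixed accordingly) this factor is $\le\be$, which is \eqref{est:error:recursion}. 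Since the contraction also restores $\sup_{t\ge t_n}\Sob{g_n(t)}{L^2}\le\gam_0\sup_t\Sob{f(t)}{L^2}$, the induction closes, the base case $n=1$ being $g_0 = f_0 - f$; that $f_n = P_N f_n$ and $f_n\in C\cap L^\infty$ valued in $L^2_\s$ follows from the formula \eqref{eq:update:n}, the regularity of $u$, and \cref{thm:nse:ng:wellposed}. \textbf{The main obstacle} is Step 2: obtaining the sensitivity estimate in a topology strong enough to feed Step 1 while keeping $C_1$ at most of order $\nu N^2$ and tracking the $\nu,\tG,\ts_1$–dependence precisely enough that the window $2C_1\le\mu\le\til c\,\nu N^2$ is nonempty exactly under \eqref{cond:mu:N:low}. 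The tension is genuine — synchronization forces $\mu$ large relative to the coefficients ($\sim\nu\tG^2$, etc.) of the bilinear terms, well-posedness of \eqref{eq:step:n} forces $\mu\lesssim\nu N^2$, and one must simultaneously avoid incurring a power of $N$ when absorbing $g_{n-1}$ (which, living on the observed modes, only satisfies $\Sob{A^{1/2}g_{n-1}}{L^2}\le N\Sob{g_{n-1}}{L^2}$) — and \eqref{cond:mu:N:low} is exactly what reconciles these, leaning on the uniform $H^2$ bounds of \cref{sect:notation} (and the sharper Dascaliuc--Foias--Jolly bounds in the time-independent case).
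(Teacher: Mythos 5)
Your proposal is correct and follows essentially the same route as the paper: the cancellation of the time derivative yielding the identity $g_n=B_N(Q_Nw_n,Q_Nw_n)+DB_N(u)Q_Nw_n$, an $H^1$ enstrophy estimate on $w_n$ closed by Gr\"onwall with a transient period $\rho_n$ chosen so that the initial layer is dominated by the forced term, and a choice of $\mu$ in the window between $\sim\nu(\ts_1+\tG)\tG^2$ and $\til c\,\nu N^2$ that closes the induction under \eqref{cond:mu:N:low}. The one place the paper is slightly cleaner than your Step 1 is the quadratic term: writing $B_N(Q_Nw_n,Q_Nw_n)_i=P_NP\nabla\cdotp\bigl(Q_Nw_n\,(Q_Nw_n)_i\bigr)$ gives $\Sob{B_N(Q_Nw_n,Q_Nw_n)}{L^2}\lesssim N\Sob{Q_Nw_n}{L^4}^2\lesssim N\Sob{A^{1/2}Q_Nw_n}{L^2}\Sob{Q_Nw_n}{L^2}\lesssim\Sob{A^{1/2}Q_Nw_n}{L^2}^2$ by Ladyzhenskaya and $\Sob{Q_Nw_n}{L^2}\leq N^{-1}\Sob{A^{1/2}Q_Nw_n}{L^2}$, so no logarithmic interpolation or higher-order norm is needed.
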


\begin{Rmk}\label{rmk:condition}
It is worth pointing out that \eqref{cond:mu:N:low} depends on the the unknown forcing $f$. However, it must be emphasized that this condition only depends on $f$ through its size. From a practical perspective, one must always approach the problem of parameter estimation with prior knowledge in hand. In this light, what condition \eqref{cond:mu:N:low} indicates is that if one has access to the Grashof number of the flow, for instance through measurement of the Reynolds number (see \cref{rmk:future}), then the only prior knowledge on the force that is needed to achieve exact recovery is knowledge of its shape factor $\til{\s}_1$. It moreover indicates that having such knowledge in one's possession can quantitatively inform what balance is needed between the number of observations and the algorithmic parameter, $\mu$, to ensure a full reconstruction of the unknown force.
\end{Rmk}

\begin{Rmk}\label{rmk:cond}
 Regarding condition \eqref{cond:mu:N:low}, we remind the reader that $\tG$ also depends on viscosity. Thus, the intuition that the number of observations needed should increase as the viscosity decreases or that fewer observations are needed when viscosity is large is reflected in the statement.
\end{Rmk}

Note that when $f$ is time-dependent, \cref{thm:low} only asserts recovery of the external force asymptotically in time. However, when the force is time-periodic or time-independent, then \cref{thm:low} immediately implies that the external force is eventually recovered; we provide a statement of the time-independent case in the following corollary since the corresponding approximating forces are obtained by evaluating the sequence of approximating forces asserted in \cref{thm:low} at certain times.

\begin{Cor}\label{cor:low:time:independent}
Suppose that $f=P_Nf$, for some $N\geq1$, is time-independent, and that $\{P_Nu(t)\}_{t\geq t_0}$ is given. There exists a positive constant $c_0$, depending on $\gam_0$, such that for any $\be\in(0,1)$, if $N\geq 1$ satisfies
    \begin{align}\label{cond:mu:N:low:time:independent}
        \be^2N^2>c_0(\ts_1+G)G^2,
    \end{align}
then there exists a choice for the tuning parameter $\mu$  and an increasing sequence of times $t_{n-1}\leq t_n$ such that
    \begin{align}\label{est:error:recursion:time:independent}
        \Sob{f_n-f}{L^2}\leq \be\Sob{f_{n-1}-f}{L^2},
    \end{align}
for all $n\geq1$, where $f_n=P_Nf_n$ and each $f_n$ is given by \eqref{eq:update:n} evaluated at $t=t_n$.
\end{Cor}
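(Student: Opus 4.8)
The plan is to deduce the corollary from \cref{thm:low}, the only substantive point being to convert its asymptotic-in-time recursion into a genuine iteration on elements of $P_NL^2_\s$. First I would dispatch the hypotheses: when $f$ is time-independent the suprema in \eqref{def:Grashof} and \eqref{def:shape} collapse, so $\tG=G$ and $\ts_1=\s_1$; moreover such an $f$ automatically lies in $C([t_0,\infty);L^2_\s)\cap L^\infty(t_0,\infty;L^2_\s)$, and it lies in $L^\infty(t_0,\infty;H^1_\s)$ since $\s_1<\infty$ (the right-hand side of \eqref{cond:mu:N:low:time:independent} being finite). Hence \eqref{cond:mu:N:low:time:independent} is precisely the hypothesis \eqref{cond:mu:N:low} of \cref{thm:low} written in terms of the time-independent invariants, and one may take $c_0:=c$, the constant furnished by \cref{thm:low}; in the present setting one may in addition invoke the sharper $H^2$ absorbing-ball bound \eqref{def:rad:H2} in place of \eqref{est:rad:H2:gen}.

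Next I would run the algorithm of \cref{sect:algorithm} on time-independent inputs. Set $\hat f_0:=f_0$, chosen constant in time (permissible, $f_0$ being user-prescribed); and, having produced $\hat f_{n-1}\in P_NL^2_\s$ with relative error $\gam_{n-1}:=\Sob{\hat f_{n-1}-f}{L^2}/\Sob{f}{L^2}\leq\gam_0$, solve \eqref{eq:step:n} with $f_{n-1}$ replaced by the time-independent snapshot $\hat f_{n-1}$, form $u_n=P_Nu+Q_Nv_n$, produce the time-dependent force $f_n(\cdot)$ via \eqref{eq:update:n} on $I_n=[t_{n-1}+\rho_n,\infty)$, and set $\hat f_n:=f_n(t_n)$ with $t_n:=t_{n-1}+\rho_n$ and $\rho_n$ chosen as in the proof of \cref{thm:low}. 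The one-step estimate established in that proof reads, for any admissible input $h$ of relative error at most $\gam_0$, under \eqref{cond:mu:N:low} for the prescribed $\mu$ and $\rho_n$ sufficiently large,
\[
\sup_{t\geq t_n}\Sob{f_n(t)-f}{L^2}\;\leq\;\be\,\sup_{t\geq t_{n-1}}\Sob{h(t)-f}{L^2}.
\]
Applying this with $h=\hat f_{n-1}$ and using that $\hat f_{n-1}$ is constant in $t$, the right-hand supremum equals $\Sob{\hat f_{n-1}-f}{L^2}$; bounding the left-hand side below by its value at $t=t_n$ gives
\[
\Sob{\hat f_n-f}{L^2}\;\leq\;\sup_{t\geq t_n}\Sob{f_n(t)-f}{L^2}\;\leq\;\be\,\Sob{\hat f_{n-1}-f}{L^2},
\]
which is \eqref{est:error:recursion:time:independent} after relabelling $\hat f_n$ as $f_n$ (and $f_n=P_Nf_n$ directly from \eqref{eq:update:n}). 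In particular $\gam_n\leq\be\gam_{n-1}\leq\gam_0$, so the inductive hypothesis on the relative error is reproduced, and since $\gam_n$ is non-increasing the single constant $c_0=c(\gam_0)$ serves uniformly in $n$.

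The substance lies in the second paragraph: \cref{thm:low} controls $f_n$ only in a sup-in-time, time-asymptotic sense, and a priori a single snapshot of the time-dependent output of the unmodified algorithm need not satisfy a clean geometric recursion against the previous snapshot, since $\sup_{t\geq t_{n-1}}\Sob{f_{n-1}(t)-f}{L^2}$ can exceed $\Sob{f_{n-1}(t_{n-1})-f}{L^2}$. Feeding the time-independent snapshot back into \eqref{eq:step:n} at each stage is exactly what collapses that supremum to a point value; the remaining work — verifying that the size, relative-error, and regularity requirements needed to apply (the proof of) \cref{thm:low} at each stage persist for the snapshots — is routine, because the relative errors $\gam_n$ are non-increasing and each $\hat f_n\in P_NL^2_\s$ is bounded, uniformly in $n$, in every norm that enters the estimates.
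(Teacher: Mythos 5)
Your proof is correct, and in outline it is the route the paper takes: the paper offers no separate argument for this corollary, presenting it as an immediate consequence of \cref{thm:low} obtained by ``evaluating the sequence of approximating forces at certain times.'' What you add --- and it is a genuine addition --- is the observation that this evaluation is not quite immediate: \cref{thm:low} delivers only the sup-in-time recursion $\sup_{t\geq t_n}\Sob{f_n(t)-f}{L^2}\leq\be\,\sup_{t\geq t_{n-1}}\Sob{f_{n-1}(t)-f}{L^2}$, and a single snapshot of the time-dependent output need not be bounded by $\be$ times the previous snapshot, since the previous supremum may exceed the previous point value. Your fix --- feeding the constant-in-time snapshot $\hat f_{n-1}=f_{n-1}(t_{n-1})$ back into \eqref{eq:step:n}, so that $\sup_{t\in I_{n-1}}\Sob{g_{n-1}(t)}{L^2}$ collapses to $\Sob{\hat f_{n-1}-f}{L^2}$ --- is exactly the device the paper itself deploys in the proof sketch of \cref{thm:recycle}, where the snapshots $f_n(\rho_n)$ are reinserted as constant forces at the next stage; your argument is in effect the non-recycled version of that proof. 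The remaining bookkeeping you carry out (monotonicity of the relative errors $\gam_n\leq\gam_0$, so that the single constant $c_0=c(\gam_0)$ and the choice of $\mu$ remain admissible at every stage; $P_N\hat f_n=\hat f_n$ directly from \eqref{eq:update:n}; and the identification $\tG=G$, $\ts_1=\s_1$ for time-independent $f$) matches the induction in the proof of \cref{thm:low} and closes the argument.
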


In the setting of time-independent forcing, one can in fact ``recycle'' the data provided that a sufficiently long time-series is available. By ``recycle" we mean that once an approximation to the force is proposed by the algorithm at a given stage, we may use this proposed forcing to re-run the algorithm over \textit{the same time window} to produce the subsequent approximation of the force in the following stage, and so on. In this way, the same data set is used over and over in order to generate new approximations to the force.

\begin{Thm}\label{thm:recycle}
Let $T>0$. Suppose that $f=P_Nf$, for some $N\geq1$, is time-independent, and that $\{P_Nu(t)\}_{t\in[0,T)}$ is given, for some $t_0\geq0$. There exists a positive constant $c_0$, depending on $\gam_0$, such that for any $\be\in(0,1)$, if $N\geq 1$ satisfies
    \begin{align}\label{cond:mu:N:low:recycle}
        \be^2N^2>c_0(\ts_1+G)G^2,
    \end{align}
then for each $n\geq1$, there exists a choice for the tuning parameter $\mu$ and a sequence of times, $t_n>0$, such that if $T>t_0+t_n$, for all $n\geq1$, then
    \begin{align}\label{est:error:recursion:recycle}
        \Sob{f_n-f}{L^2}\leq \be\Sob{f_{n-1}-f}{L^2},
    \end{align}
for all $n\geq1$, where $f_n=P_Nf_n$, $f_n\in L^2_\s$, and each is determined by a procedure similar to that in \cref{sect:algorithm}, except that $v_n, u_n, f_n$ are always derived on the interval $[0,T)$.
\end{Thm}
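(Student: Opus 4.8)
The plan is to reduce \cref{thm:recycle} to the machinery already established for \cref{thm:low}, observing that in the time-independent case the algorithm does not genuinely need an unbounded time horizon: all that is used at each stage is a transient relaxation period whose length $t_n - t_{n-1}$ can be bounded \emph{a priori} in terms of $\mu$, $\nu$, $N$, $\tG$, and the current model-error size. First I would set up the recycling procedure precisely: fix the observation window $[t_0, T)$, and at stage $n$ solve \eqref{eq:step:n} on $[t_0, T)$ with forcing $f_{n-1}$ (now a fixed element of $P_N L^2_\s$), arbitrary initial state $v_n^0$ at time $t_0$, then set $u_n = P_N u + Q_N v_n$ and define $f_n$ by \eqref{eq:update:n} evaluated at a time $t_0 + t_n < T$. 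The point is that $f_n$ is again time-independent and satisfies $f_n = P_N f_n$, so the scheme is self-consistent and the iteration closes.

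The core estimate is the same one underlying \cref{thm:low}: after a relaxation time proportional to $\mu^{-1}$ (with an explicit logarithmic dependence on the ratio of initial synchronization error to target accuracy), the synchronization error $w_n = u_n - u$ is driven down to size $O(\|g_{n-1}\|_{L^2})$, where $g_{n-1} = f_{n-1} - f$; plugging this into the sensitivity estimates (the ones promised in \cref{sect:sensitivity}) that control $\|f_n - f\|_{L^2}$ by $\|w_n\|$ on the relevant topology yields $\|f_n - f\|_{L^2} \le \be \|f_{n-1} - f\|_{L^2}$ once $N$ satisfies \eqref{cond:mu:N:low:recycle} and $\mu$ is tuned as in the proof of \cref{thm:low}. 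Since $f_{n-1}$ is time-independent and lies in the absorbing-ball regime for \eqref{eq:nse:ng} (by \cref{thm:nse:ng:wellposed} together with the \emph{a priori} bounds \eqref{def:rad:H1}, \eqref{est:rad:H2:gen}), the transient length needed at stage $n$ is bounded by some $t_n$ depending only on $\mu$, $\nu$, $N$, $\tG$, $\ts_1$, and $\gam_0$ — crucially \emph{not} growing faster than can be accommodated — so the hypothesis $T > t_0 + t_n$ is exactly what guarantees the window is long enough at stage $n$. One then iterates: \eqref{est:error:recursion:recycle} for all $n \ge 1$ follows by induction, each step consuming only the finite sub-window $[t_0, t_0 + t_n) \subset [t_0, T)$.

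The main obstacle I anticipate is \emph{decoupling the transient length from the cumulative history of the iteration}. In \cref{thm:low} the time intervals march forward, so each stage starts from wherever the previous one left off and one never revisits old data; here the same window is reused, so one must verify that running \eqref{eq:step:n} afresh from an arbitrary $v_n^0$ at time $t_0$ — rather than from the endpoint of the previous stage — still reaches the $O(\|g_{n-1}\|_{L^2})$ synchronization plateau within the allotted $t_n$. This requires that the relaxation estimate for \eqref{eq:nse:ng} be \emph{uniform over initial states in a fixed absorbing ball} and \emph{uniform over the admissible family of forcings} $\{f_{n-1}\}$, which all lie in a bounded subset of $P_N L^2_\s \cap H^1_\s$ of size controlled by $\tG$, $\ts_1$, $\gam_0$ (since $g_{n-1} \to 0$ geometrically, $\|f_{n-1}\|_{H^1}$ stays bounded). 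Granting that uniformity — which follows from the same energy/enstrophy arguments used to prove \cref{thm:nse:ng:wellposed} and the synchronization results of \cite{AzouaniOlsonTiti2014} — the rest is a routine repackaging of the \cref{thm:low} argument with ``$\sup_{t \ge t_n}$'' replaced by ``evaluation at $t = t_0 + t_n$'' and all intervals truncated to $[t_0, T)$; one should also note explicitly that $t_n$ may be taken nondecreasing in $n$ (indeed one can take a single $t_* = \sup_n t_n < \infty$ if the $\|g_n\|$ are bounded, so the single hypothesis $T > t_0 + t_*$ suffices), which makes the statement cleanest.
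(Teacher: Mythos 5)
Your main line of argument is correct and is essentially the paper's own proof: rerun the nudged system over the fixed window $[t_0,T)$ with the (constant-in-time) force produced by the previous stage, use the identity \eqref{eq:error:low:modes} together with \eqref{est:error:low:modes}, and replace the tail-supremum sensitivity bound of \cref{prop:sensitivity} by its pointwise analogue at the single evaluation time $t=\rho_n$, which is exactly what the Gronwall inequality \eqref{est:wn:H1:time:independent} delivers once $\rho_n$ exceeds the logarithmic threshold. Your observation that the only uniformity needed is over initial states in a fixed absorbing ball (the estimates $I$--$IV$ in the proof of \cref{prop:sensitivity} involve only bounds on the reference solution $u$ and the explicit quantity $\Sob{g_{n-1}}{L^2}$) is also consistent with the paper.

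One concrete claim in your proposal is false, however: that one may take a single $t_*=\sup_n t_n<\infty$ because the $\Sob{g_n}{L^2}$ are bounded. The relevant quantity is not the size of $g_{n-1}$ but the ratio controlling the transient,
\begin{align}
\rho_n\geq \frac{1}{\mu}\ln\left[\frac{\mu}{C_1\nu}\left(\frac{\nu\Sob{A^{1/2}w_n(0)}{L^2}}{\Sob{g_{n-1}}{L^2}}\right)^2\right].\notag
\end{align}
In the recycled scheme every stage restarts at the left endpoint of the window, so $\Sob{A^{1/2}w_n(0)}{L^2}=\Sob{A^{1/2}(v_n^0-u(t_0))}{L^2}$ does not decrease with $n$ (there is no way to make $v_n^0$ converge to $u(t_0)$ in $H^1$ without already knowing $Q_Nu(t_0)$), whereas $\Sob{g_{n-1}}{L^2}\leq\be^{n-1}\Sob{g_0}{L^2}$ decays geometrically. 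Hence $\rho_n\gtrsim \frac{2(n-1)}{\mu}\ln(1/\be)$ grows linearly in $n$, and $\sup_n t_n=\infty$; each factor of $\be$ costs at least $\mu^{-1}\ln(1/\be^2)$ of relaxation time, and these costs accumulate (compare \eqref{est:relaxation:n}). This is precisely why the theorem is phrased with the stage-dependent hypothesis $T>t_0+t_n$: a finite window only supports finitely many iterations. The error occurs only in your proposed ``cleanup'' of the statement and does not affect the validity of the induction itself.
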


The proof of \cref{thm:recycle} follows along the same lines as that of \cref{thm:low}, except that one requires a few technical modifications of the setup described in \cref{sect:algorithm}; we supply the relevant details of these modifications in \cref{sect:proof}.

\begin{Rmk}\label{rmk:marchioro} 
A well-known example given by Marchioro \cite{Marchioro1986} exhibits a scenario where observation of low-modes below the spectrum of the forcing are sufficient to determine the forcing. In particular, the work \cite{Marchioro1986} identifies a class of low-mode body forces for which the asymptotic behavior of solutions to \eqref{eq:nse} are characterized by a one-point global attractor for \eqref{eq:nse} whose unique stationary point is supported on a frequency shell strictly smaller than that of the force. Of course, this phenomenon is made possible due to presence of nonlinearity in \eqref{eq:nse}.

The extent to which it is possible to determine features of the forcing beyond the scales which are observed is, in general, not known and in particular, not addressed by \cref{thm:recycle}. Indeed, condition \eqref{cond:mu:N:low:recycle} identifies an upper bound on the number of modes that one should observe in order to uniquely determine the forcing provided that scales beyond those that are directly observed are not forced to begin with. In particular, it would be interesting to 1) study the sharpness of condition \eqref{cond:mu:N:low:recycle}, and 2) whether one may identify classes of forces, beyond the Marchioro class, which inject energy into scales larger than those which are directly observed, preferably much larger, but can nevertheless be reconstructed by these observations. These issues are left to be investigated in a future work.
\end{Rmk}

\begin{Rmk}\label{rmk:future}
There are at least three natural directions that warrant further investigation. First and foremost, a systematic computational study that probes the efficacy and limitations of this method for recovering the force should be carried out, especially in the context of noisy observations. For instance, each of theorems above suggest that a large number of modes are required to achieve convergence. In the context of a turbulent flow, $G\sim \text{Re}^2$, where $\text{Re}$ denotes the Reynolds number of the flow \cite{DascaliucFoiasJolly2008, DascaliucFoiasJolly2009}, which may be intractably large in practice. On the other hand, the analysis performed here inherently takes into account ``worst-case'' scenarios that may saturate various inequalities that were used, but which may occur rarely in reality. This direction will be explored in future work.

Secondly, the choice of Fourier modes as the form of observations is chosen due to its conceptual and analytical convenience. In principle, other observations on the velocity, such as nodal values or local spatial averages can also be used. However, the choice of Fourier modes allows one to commute the ``observation operator,'' $P_N$ with derivatives, which greatly facilitates the analysis. The failure of this commutation introduces further analytical difficulties.
Indeed, the original feedback control system was introduced with a general interpolant observable operator, $I_h$, replacing $P_N$ in \eqref{eq:nse:ng}. In connection to this, the reader is referred to the classical works \cite{FoiasTemam1984, JonesTiti1992a, JonesTiti1992b, CockburnJonesTiti1995, CockburnJonesTiti1997}, where the notion of ``finite determining parameters,'' properly generalizing ``determining modes,'' was developed.

Thirdly, although the existence of an inertial manifold for the 2D NSE is an open problem, there are several systems, which do posesses inertial manifolds \cite{FoiasSellTemam1988, TemamBook1997}, such as the Kuramoto-Sivashinsky equation \cite{FoiasNicolaenkoSellTemam1988}; it would be interesting to explore what can be gained in this particular context. In a similar vein, it would also be interesting to explore the usage of Approximate Inertial Manifolds for the 2D NSE, as it is used, for instance, in post-processing Galerkin methods. Indeed, these ideas have been successfully used in the context of downscaling data assimilation algorithms in \cite{MondainiTiti2018}.
\end{Rmk}

\section{Outline of the Convergence Argument}\label{sect:outline}
To prove \cref{thm:low}, the object of interest will be the error in the forcing, which we also refer to as ``model error.'' This is denoted by
    \begin{align}\label{def:error}
        g_n:=f_n-f,
    \end{align}
where $f_n$ is generated from the scheme described in \cref{sect:algorithm}. We claim the following: there exists a sequence of times $t_n\geq t_{n-1}$, for all $n\geq1$, such that
    \begin{align}\label{eq:converge:claim}
        \sup_{t\geq t_n}\Sob{A^{1/2} g_n(t)}{L^2}\leq \be\sup_{t\geq t_{n-1}}\Sob{A^{1/2} g_{n-1}(t)}{L^2},
    \end{align}
for some $\be\in(0,1)$. The lengths of the relaxation periods, $ \rho_n:=t_n-t_{n-1}$, between the moments, $t_{n-1}, t_n$, at which we choose to reconstitute new forces, $f_{n-1}, f_n$, respectively, are prescribed to be sufficiently large in order to allow time for the system \eqref{eq:step:n} to relax. As we will see, the length of these relaxation periods, $\rho_n$, will essentially be determined by the relaxation parameter $\mu$; the reader is referred to \cref{rmk:relaxation} for a precise relation.

Let us denote the synchronization error by
    \begin{align}\notag
        w_n:=v_n-u.
    \end{align}
Observe that the evolution of $w_n$ over the time interval $[t_{n-1},\infty)$ is governed by
    \begin{align}\label{eq:wn}
        \frac{d}{dt}w_n+\nu Aw_n+B(w_n,w_n)+DB(u)w_n=g_{n-1}-\mu P_Nw_n, \quad w_n(t_{n-1})=v_n^0-u(t_{n-1}),
    \end{align}
where $DB(u)v:=B(u,v)+B(v,u)$. Let $B_N=P_NB$ and $DB_N=P_NDB$. Then, using \eqref{eq:nse}, the facts that $P_Nf=f$ and $u_n=P_Nu+Q_Nv_N$, and  \eqref{eq:update:n}, we obtain
    \begin{align}\label{eq:error:identity}
        g_n&=\left(\frac{d}{dt}P_Nu_n+\nu AP_Nu_n+B_N(u_n,u_n)\right)-\left(\frac{d}{dt}P_Nu+\nu AP_Nu+B_N(u,u)\right)\notag\\
        &=B_N(P_Nu+Q_Nv_n,P_Nu+Q_Nv_n)-B_N(P_Nu+Q_Nu,P_Nu+Q_Nu)\notag\\
        &=B_N(Q_Nw_n,Q_Nw_N)+DB_N(u)Q_Nw_n,
    \end{align}
which holds over $[t_{n-1},\infty)$. In analogy to \eqref{eq:rel:reynolds}, we define the ``Reynolds stress'' at stage $n$ by
    \begin{align}\label{eq:error:low:modes}
        \mathcal{R}_{N}^{(n)}:=B_N(Q_Nw_n,Q_Nw_n)+DB_N(u)Q_Nw_n=g_n.
    \end{align}
Ultimately, \eqref{eq:error:low:modes} enables us to envision a recursion in the model error at each stage through the dependence of $\mathcal{R}_N^{(n)}$ on $\mathcal{R}_N^{(n-1)}$ via the synchronization error $w_n$. 
Hence, in order to prove that $\mathcal{R}_{N}^{(n)}$ vanishes in the limit as $n\goesto\infty$, we require \textit{sensitivity-type} estimates, that is, estimates on $w_n$. The estimates will take on the following form:
    \begin{align}\label{est:sensitivity:rough}
        \begin{split}
        \Sob{A^{1/2} w_n(t)}{L^2}&\leq \nu\left(\frac{\nu}{\mu}\right)^{1/2}O\left(\frac{\sup_{t\geq t_n}\Sob{g_{n-1}(t)}{L^2}}{\nu^2}\right),
        \end{split}
    \end{align}
for all $t\geq t_n$, for some sufficiently large  $t_n\geq t_{n-1}$. Before we go on to develop estimates of the form \eqref{est:sensitivity:rough} in \cref{sect:sensitivity}, let us first determine the precise manner in which their application will arise. 

\begin{Rmk}\label{rmk:convention}
In what follows and for the remainder of the manuscript, we make use of the convention that $C$ denotes a generic dimensionless constant, which may change line-to-line and possibly be large, but will always be independent of $N,\nu, \gam_0$. 
\end{Rmk}

To estimate \eqref{eq:error:low:modes}, we will invoke the following inequalities for $B(u,v)$, which follows from a direct application of H\"older's inequality:
    \begin{align}\label{est:bilinear}
        \Sob{B(u,v)}{L^2}\leq \min\{\Sob{u}{L^\infty}\Sob{A^{1/2}v}{L^2},\Sob{u}{L^4}\Sob{A^{1/2}v}{L^4}\}
    \end{align}
We will also make use of the fact that $P_N, Q_N$ commute with $A^{m/2}$, for all integers $m$, and that the following inequalities hold for all $N>0$, $\ell>0$, and $m\in\Z$:
    \begin{align}\label{est:poincare}
        \begin{split}
        \Sob{A^{m/2}P_Nv}{L^2}\leq N^m\Sob{P_Nv}{L^2},\quad \Sob{A^{m/2}Q_Nv}{L^2}\leq N^{-\ell}\Sob{A^{(m+\ell)/2}Q_Nv}{L^2}.
        \end{split}
    \end{align}
Lastly, we make the following elementary, but important observation for treating the first term appearing in \eqref{eq:error:low:modes}: for vector fields $u=(u_1,u_2)$ and $v=(v_1,v_2)$, we have
    \begin{align}\notag
        B(u,v)_i=\sum_{j=1}^2P(u\cdotp\nabla v_i)=P\nabla\cdotp(uv_i),\quad i=1,2,
    \end{align}
where $B(u,v)_i$ denotes the $i$-th component of the vector field $B(u,v)$. Thus, upon applying \eqref{est:poincare} and H\"older's inequality, we obtain
    \begin{align}\label{est:Reynolds:1}
        \Sob{B_N(u,v)}{L^2}^2\leq2\sum_{i=1}^2\Sob{P_NA^{1/2}(uv_i)}{L^2}^2\leq 2N^2\sum_{i=1}^2\Sob{uv_i}{L^2}^2\leq 2N^2\Sob{u}{L^4}^2\Sob{v}{L^4}^2.
    \end{align}

From \eqref{eq:error:low:modes}, we now apply \eqref{est:Reynolds:1}, \eqref{est:bilinear}, \eqref{est:poincare}, and interpolation to obtain
    \begin{align}\label{est:error:low:modes}
        \Sob{\mathcal{R}_{N}^{(n)}}{L^2}&\leq \Sob{P_NA^{1/2}(Q_Nw_n\otimes Q_Nw_n)}{L^2}+\Sob{DB_N(u)Q_Nw_n}{L^2}\notag\\
        &\leq CN\Sob{Q_Nw_n}{L^4}^2+\Sob{u}{L^\infty}\Sob{A^{1/2} Q_Nw_n}{L^2}+\Sob{Q_Nw_n}{L^4}\Sob{A^{1/2} u}{L^4}\notag\\
        &\leq CN\Sob{A^{1/2}Q_N w_n}{L^2}\Sob{Q_Nw_n}{L^2}+C\Sob{Au}{L^2}^{1/2}\Sob{u}{L^2}^{1/2}\Sob{A^{1/2} Q_Nw_n}{L^2}\notag\\
        &\quad+\Sob{A^{1/2} Q_Nw_n}{L^2}^{1/2}\Sob{Q_Nw_n}{L^2}^{1/2}\Sob{Au}{L^2}^{1/2}\Sob{A^{1/2} u}{L^2}^{1/2}\notag\\
        &\leq C\Sob{A^{1/2}Q_N w_n}{L^2}^2+C\Sob{Au}{L^2}^{1/2}\Sob{u}{L^2}^{1/2}\Sob{A^{1/2} Q_Nw_n}{L^2}+\frac{C}{N^{1/2}}\Sob{A^{1/2} Q_Nw_n}{L^2}\Sob{Au}{L^2}^{1/2}\Sob{A^{1/2} u}{L^2}^{1/2}\notag\\
        &\leq C_0\nu(\ts_1+\tG)^{1/2}\tG\left(1+\frac{\Sob{A^{1/2} Q_Nw_n}{L^2}}{\nu}\right)\Sob{A^{1/2} Q_Nw_n}{L^2},
    \end{align}
for some universal constant $C_0>0$, independent of $n$, for all $t\geq t_{n-1}$. Note that we also invoked the assumption that $u$ belongs to the absorbing ball in $H^1_\s$ and $H^2_\s$ (see \eqref{def:rad:H1}, \eqref{est:rad:H2:gen}, respectively) in obtaining the final inequality. It is at this point that one applies \eqref{est:sensitivity:rough} in order to properly close the recursive estimate. In order to rigorously carry out this argument, let us therefore prove that \eqref{est:sensitivity:rough} indeed holds.

\begin{Rmk}\label{rmk:high}
In the case when the force contains modes beyond those that are observed, one can identify an obstruction that precludes a proof in the manner described above. Suppose that $f\neq P_Nf$. Then modify the ansatz \eqref{eq:update:n} for the force at each stage by removing the projection onto low modes. In particular, re-define $f_n$ so that
    \begin{align}\notag
        f_n=\frac{d}{dt}u_n+\nu Au_n+B(u_n,u_n).
    \end{align}
Then the model error becomes
    \begin{align}\label{eq:model:error:general}
        g_n=\frac{d}{dt}Q_Nw_n+\nu AQ_Nw_n+B(Q_Nw_n,Q_Nw_n)+DB(u)Q_Nw_n.
    \end{align}
Upon applying the complementary projection, $Q_N$, to \eqref{eq:wn}, and combining the result with \eqref{eq:model:error:general}, we see that
    \begin{align}\notag
        Q_Ng_n=-B^N(P_Nw_n,P_Nw_n)-B^N(P_Nw_n,Q_Nw_n)-B^N(Q_Nw_n,P_Nw_n)-DB^N(u)P_Nw_n+Q_Ng_{n-1},
    \end{align}
where $B^N=Q_NB$ and $DB^N=Q_NDB$. Due to the presence of $Q_Ng_{n-1}$ on the right-hand side, one cannot expect to obtain a convergent recursive relation of the form $\Sob{Q_Ng_n}{L^2}\leq \be\Sob{Q_Ng_{n-1}}{L^2}$, for some $\be\in(0,1)$. Although one can establish an estimate of $Q_Ng_n$ from this relation that is of the form $\Sob{Q_Ng_n-Q_Ng_{n-1}}{L^2}\leq O_N(\Sob{A^{1/2}Q_Nw_n}{L^2})$, where the suppressed constant has a favorable dependence on $N$, a subsequent analysis will nevertheless be unable to establish a suitable recursion relation since we will only ever have access to an estimate of the form \eqref{est:sensitivity:rough}.
\end{Rmk}

\section{Sensitivity Analysis}\label{sect:sensitivity}
We establish a more precise form of the crucial estimates \eqref{est:sensitivity:rough}, which form the bridge to the desired recursion for the model error at each stage of the approximation. For this, we recall the notation introduced in \cref{sect:outline}. In particular, we prove the following.

\begin{Prop}\label{prop:sensitivity}
There exist universal constants $\overline{c}_0, \underline{c}_0\geq1$ such that if $\mu, N$ satisfy
    \begin{align}\label{cond:mu:N:sensitivity}
            \underline{c}_0\left(\ts_1+\tG\right)\tG^2\leq \frac{\mu}{\nu}\leq \overline{c}_0 N^2,
    \end{align}
then for each $n\geq1$, there exist relaxation periods, $\rho_n=t_n-t_{n-1}$,  for some $t_n>t_{n-1}$, such that
    \begin{align}\label{est:sensitivity:H1}
        \sup_{t\in I_n}\left(\frac{\Sob{A^{1/2} w_n(t)}{L^2}}{\nu}\right)
        &\leq \left(\frac{2C_1\nu}{\mu}\right)^{1/2}\left(\frac{\sup_{t\in I_{n-1}}\Sob{g_{n-1}(t)}{L^2}}{\nu^2}\right),
    \end{align}
where $I_n:=[t_{n-1}+\rho_n,\infty)$, for some universal constant $C_1\geq1$, independent of $n$. Moreover, $\rho_n$ satisfies \eqref{def:relaxation}.
\end{Prop}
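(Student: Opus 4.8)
The plan is to derive the sensitivity estimate \eqref{est:sensitivity:H1} from an energy estimate on the synchronization error equation \eqref{eq:wn}, treating $g_{n-1}$ as an exogenous forcing term and exploiting the feedback term $-\mu P_N w_n$ to produce the small factor $(\nu/\mu)^{1/2}$. First I would test \eqref{eq:wn} against $A w_n$ in $L^2$, which, after using that $P_N$ commutes with $A^{1/2}$, gives
\begin{align}\notag
\frac{1}{2}\frac{d}{dt}\Sob{A^{1/2}w_n}{L^2}^2 + \nu\Sob{A w_n}{L^2}^2 + \mu\Sob{A^{1/2}P_N w_n}{L^2}^2 = \langle g_{n-1}, A w_n\rangle - \langle B(w_n,w_n), A w_n\rangle - \langle DB(u)w_n, A w_n\rangle.
\end{align}
The key structural point is to split $\Sob{A^{1/2}w_n}{L^2}^2 = \Sob{A^{1/2}P_N w_n}{L^2}^2 + \Sob{A^{1/2}Q_N w_n}{L^2}^2$ and use the spectral gap inequalities \eqref{est:poincare}: on the high modes, $\Sob{A^{1/2}Q_N w_n}{L^2} \le N^{-1}\Sob{A Q_N w_n}{L^2}\le N^{-1}\Sob{A w_n}{L^2}$, so the dissipation $\nu\Sob{A w_n}{L^2}^2$ controls $\nu N^2$ times the high-mode part of $\Sob{A^{1/2}w_n}{L^2}^2$, while the feedback $\mu\Sob{A^{1/2}P_N w_n}{L^2}^2$ directly controls the low-mode part with coefficient $\mu$. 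Under \eqref{cond:mu:N:sensitivity} one has $\mu \le \overline{c}_0\nu N^2$, so $\mu$ is the weaker of the two coercivity coefficients, and the left side dominates (a constant times) $\mu\,\Sob{A^{1/2}w_n}{L^2}^2$ plus the residual dissipation needed to absorb nonlinear terms.

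Next I would estimate the three terms on the right. The linear-in-$g_{n-1}$ term is handled by Cauchy--Schwarz and Young: $\langle g_{n-1}, A w_n\rangle \le \tfrac{\nu}{8}\Sob{A w_n}{L^2}^2 + \tfrac{C}{\nu}\Sob{g_{n-1}}{L^2}^2$, which is the source that drives the bound. The nonlinear terms $\langle B(w_n,w_n),Aw_n\rangle$ and $\langle DB(u)w_n, Aw_n\rangle$ are estimated by \eqref{est:bilinear}, Ladyzhenskaya/Agmon-type interpolation (as already carried out in \eqref{est:error:low:modes}), and the a priori bounds $A^{1/2}u(t),Au(t)$ in the absorbing balls $\tilde B_1, 2\tilde B_2$ (via \eqref{def:rad:H1}, \eqref{est:rad:H2:gen}); this produces terms like $C\Sob{A^{1/2}w_n}{L^2}\Sob{A w_n}{L^2}^2$ and $C\nu(\ts_1+\tG)^{1/2}\tG\,\kap_0\Sob{A^{1/2}w_n}{L^2}\Sob{A w_n}{L^2}$. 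Absorbing the linear-in-$u$ piece into $\tfrac{\nu}{8}\Sob{A w_n}{L^2}^2$ requires exactly $\nu^{-1}\cdot\nu^2(\ts_1+\tG)\tG^2 \lesssim \mu$, i.e. the lower bound $\underline{c}_0(\ts_1+\tG)\tG^2 \le \mu/\nu$ in \eqref{cond:mu:N:sensitivity}; the cubic term is absorbed by a smallness bootstrap on $\Sob{A^{1/2}w_n}{L^2}$ (justified once the asymptotic bound is in force). What remains is a differential inequality of the form $\frac{d}{dt}X + c\,\mu\, X \le \frac{C}{\nu}\Sob{g_{n-1}}{L^2}^2$ with $X=\Sob{A^{1/2}w_n}{L^2}^2$, whose solution satisfies $X(t) \le X(t_{n-1})e^{-c\mu(t-t_{n-1})} + \frac{C}{c\mu\nu}\sup_{s\ge t_{n-1}}\Sob{g_{n-1}(s)}{L^2}^2$. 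Choosing the relaxation period $\rho_n = t_n - t_{n-1}$ large enough (proportional to $\mu^{-1}\log(\cdot)$, which is the relation referenced in \eqref{def:relaxation}) so that the transient term is dominated by the second term for $t \ge t_n$, and noting $\sup_{t\in I_{n-1}}\Sob{g_{n-1}}{L^2} \ge \sup_{s\ge t_{n-1}+\rho_{n-1}}\Sob{g_{n-1}}{L^2}$, one arrives at $\Sob{A^{1/2}w_n(t)}{L^2}^2 \le \frac{2C_1\nu}{\mu}\cdot\frac{(\sup_{t\in I_{n-1}}\Sob{g_{n-1}(t)}{L^2})^2}{\nu^2}$ on $I_n$, which is \eqref{est:sensitivity:H1} after dividing by $\nu^2$ and taking square roots.

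The main obstacle I anticipate is twofold. First, the nonlinear absorption is delicate: the cubic term $\Sob{A^{1/2}w_n}{L^2}\Sob{A w_n}{L^2}^2$ cannot be absorbed unconditionally into the linear dissipation, so one needs a continuity/bootstrap argument — assume $\Sob{A^{1/2}w_n}{L^2}$ stays below a threshold of size $O(\nu)$ on a maximal interval, prove the closed estimate there, and then show the threshold is not attained — and this bootstrap must be threaded through the choice of $\rho_n$ and through the hypothesis \eqref{cond:mu:N:sensitivity} to guarantee consistency; one must also check the initial data $w_n(t_{n-1}) = v_n^0 - u(t_{n-1})$ does not spoil this, which is why only the asymptotic-in-time statement on $I_n$ is claimed. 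Second, bookkeeping the dependence on the shape factor $\ts_1$ and Grashof number $\tG$ so that the final constants $C_1, \underline c_0, \overline c_0$ are genuinely universal and $n$-independent requires care, particularly in invoking the $H^2$ a priori bound \eqref{est:rad:H2:gen}, which carries the $(\ts_1+\tG)\tG$ factor that ends up dictating the lower bound in \eqref{cond:mu:N:sensitivity}. The precise specification of $\rho_n$ satisfying \eqref{def:relaxation} is then a matter of reading off the decay rate $c\mu$ from the differential inequality.
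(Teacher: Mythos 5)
Your overall strategy --- the enstrophy balance for $w_n$, absorption of the linear-in-$u$ terms using the lower bound on $\mu/\nu$, the splitting of the feedback term into low and high modes with the high-mode remainder absorbed by the dissipation via $\mu\leq\overline{c}_0\nu N^2$, Gronwall, and the choice of $\rho_n\sim\mu^{-1}\log(\cdot)$ --- matches the paper's proof. The gap is in your treatment of the quadratic term $\lb B(w_n,w_n),Aw_n\rb$. You keep it as a genuine cubic contribution $C\Sob{A^{1/2}w_n}{L^2}\Sob{Aw_n}{L^2}^2$ and propose to absorb it by a continuity/bootstrap argument requiring $\Sob{A^{1/2}w_n}{L^2}=O(\nu)$ on a maximal interval. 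But the proposition allows the initial state $v_n^0$ to be arbitrary, so $w_n(t_{n-1})=v_n^0-u(t_{n-1})$ need not satisfy any smallness condition; the bootstrap cannot be initialized, and during the transient the cubic term is not dominated by the dissipation, so the closed differential inequality you need is not available on $[t_{n-1},t_n]$. Saying the smallness is ``justified once the asymptotic bound is in force'' is circular: the asymptotic bound is exactly what is being proved, and it is obtained by integrating the differential inequality forward from $t_{n-1}$, where the smallness fails.

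The missing ingredient is the classical two-dimensional, space-periodic orthogonality identity $\lb B(v,v),Av\rb=0$ for mean-free, divergence-free $v$. Applied with $v=w_n$, it makes $\lb B(w_n,w_n),Aw_n\rb$ vanish identically, so the enstrophy balance contains only the terms $\lb B(u,w_n),Aw_n\rb$, $\lb B(w_n,u),Aw_n\rb$, $\lb g_{n-1},Aw_n\rb$, and $-\mu\Sob{A^{1/2}P_Nw_n}{L^2}^2$, all of which are at most quadratic in $w_n$; under \eqref{cond:mu:N:sensitivity} one then obtains
\begin{align}
\frac{d}{dt}\Sob{A^{1/2}w_n}{L^2}^2+\frac{3}{2}\nu\Sob{Aw_n}{L^2}^2+\frac{3}{2}\mu\Sob{A^{1/2}w_n}{L^2}^2\leq C\nu^3\left(\frac{\Sob{g_{n-1}}{L^2}}{\nu^2}\right)^2\notag
\end{align}
unconditionally, with no restriction on the size of $w_n$. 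This is precisely why the result can be stated for arbitrary $v_n^0$ and why no bootstrap appears in the paper. The remainder of your outline --- the $g_{n-1}$ term by Cauchy--Schwarz, the $DB(u)w_n$ terms via Agmon-type interpolation and the absorbing-ball bounds \eqref{def:rad:H1}, \eqref{est:rad:H2:gen} (which produce the $(\ts_1+\tG)\tG^2$ factor in the lower bound of \eqref{cond:mu:N:sensitivity}), and the final Gronwall step with the choice \eqref{def:relaxation} --- is in line with the paper.
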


\begin{proof}

Fix $n\geq1$. The enstrophy-balance for $w_n$ is obtained by taking the $L^2$--inner product of \eqref{eq:wn} by $Aw_n$, which yields
    \begin{align}\label{eq:wn:enstrophy}
        \frac{1}2\frac{d}{dt}\Sob{A^{1/2} w_n}{L^2}^2+\nu\Sob{Aw_n}{L^2}^2&=-\lb B(u,w_n),Aw_n\rb-\lb B(w_n,u),Aw_n\rb+\lb g_{n-1},Aw_n\rb-\mu \Sob{A^{1/2} P_Nw_n}{L^2}^2\notag\\
        &=I+II+III+IV.
    \end{align}
Observe that by interpolation, the Cauchy-Schwarz inequality, Poincar\'e's inequality, and \eqref{def:rad:H1}, \eqref{est:rad:H2:gen}, we may estimate
    \begin{align}
        |I|&\leq C\Sob{u}{L^\infty}\Sob{A^{1/2} w_n}{L^2}\Sob{Aw_n}{L^2}\notag\\
        &\leq C\Sob{Au}{L^2}^{1/2}\Sob{u}{L^2}^{1/2}\Sob{A^{1/2} w_n}{L^2}\Sob{Aw_n}{L^2}\notag\\
        &\leq C\mu\left(\frac{\nu}{\mu}\right)\left(\ts_1+\tG\right)\tG^2\Sob{A^{1/2} w_n}{L^2}^2+\frac{\nu}{100}\Sob{Aw_n}{L^2}^2.\notag
    \end{align}
Similarly, we obtain
    \begin{align}
        |II|
        &\leq C\Sob{A^{1/2} w_n}{L^2}^{1/2}\Sob{w_n}{L^2}^{1/2}\Sob{Au}{L^2}^{1/2}\Sob{A^{1/2}u}{L^2}^{1/2}\Sob{Aw_n}{L^2}\notag\\
        &\leq C\mu\left(\frac{\nu}{\mu}\right)\left(\ts_1+\tG\right)\tG^2\Sob{A^{1/2} w_n}{L^2}^2+\frac{\nu}{100}\Sob{Aw_n}{L^2}^2\notag.
    \end{align}
On the other hand, by the Cauchy-Schwarz inequality, we have
    \begin{align}
        |III|&\leq C\frac{\Sob{g_{n-1}}{L^2}^2}{\nu}+\frac{\nu}{100}\Sob{Aw_n}{L^2}^2\notag.
    \end{align}
Lastly, we have
    \begin{align}
        IV&=-\mu\Sob{A^{1/2} w_n}{L^2}^2+\mu\Sob{A^{1/2} Q_Nw_n}{L^2}^2\notag\\
        &\leq -\mu\Sob{A^{1/2} w_n}{L^2}^2+\frac{\mu}{N^2}\Sob{Aw_n}{L^2}^2\notag.
    \end{align}
Upon combining the estimates for $I$--$IV$ and invoking \eqref{cond:mu:N:sensitivity}, where $\underline{c}_0, \overline{c}_0\geq1$ are chosen appropriately relative to the constants $C$ appearing above, we arrive at
    \begin{align}
        \frac{d}{dt}\Sob{A^{1/2} w_n}{L^2}^2+\frac{3}2\nu\Sob{Aw_n}{L^2}^2+\frac{3}2\mu\Sob{A^{1/2} w_n}{L^2}^2&\leq C\nu^3\left(\frac{\Sob{g_{n-1}}{L^2}}{\nu^2}\right)^2.\notag
    \end{align}
An application of Gronwall's inequality yields
    \begin{align}\label{est:wn:H1}
        \left(\frac{\Sob{A^{1/2} w_n(t)}{L^2}}{\nu}\right)^2
        &\leq e^{-\mu(t-t_n)}e^{-\mu\rho_n}\left(\frac{\Sob{A^{1/2} w_n(t_{n-1})}{L^2}}{\nu}\right)^2+ C_1\left(\frac{\nu}{\mu}\right)\left(\frac{\sup_{t\geq t_{n-1}}\Sob{g_{n-1}(t)}{L^2}}{\nu^2}\right)^2,
    \end{align}
for all $t\geq t_n$, for some $C_1\geq1$ independent of $n$. Now recall that $w_n(t_{n-1})=v^0_n-u(t_{n-1})$. We choose $\rho_n>0$ such that
    \begin{align}\label{def:relaxation}
        \rho_n\geq\frac{1}{\mu}\ln\left[\left(\frac{\mu}{C_1\nu}\right)\left(\frac{\nu\Sob{A^{1/2} w_n(t_{n-1})}{L^2}}{\sup_{t\in I_{n-1}}\Sob{g_{n-1}(t)}{L^2}}\right)^2\right].
    \end{align}
Returning to \eqref{est:wn:H1}, it follows that
    \begin{align}
     \sup_{t\in I_n}\left(\frac{\Sob{A^{1/2} w_n(t)}{L^2}}{\nu}\right)
        &\leq \left(\frac{2C_1\nu}{\mu}\right)^{1/2}\left(\frac{\sup_{t\in I_{n-1}}\Sob{g_{n-1}(t)}{L^2}}{\nu^2}\right),\notag
    \end{align}
as desired.

\end{proof}

\section{Proofs of Convergence}\label{sect:proof}

Let us assume that $f=P_Nf$. At the initializing stage, $n=0$, we consider any $f_0\in C([0,\infty);L^2_\s)\cap L^\infty([0,\infty;);L^2_\s)$. Recall that in each subsequent stage, we will produce a new approximation, $f_n$, for the force via the ansatz \eqref{eq:update:n}. We proceed by induction.

\begin{proof}[Proof of \cref{thm:low}] Fix $\be\in(0,1)$.  We choose $c_0$ to satisfy
    \begin{align}\notag
        c_0\geq\max\left\{\underline{c}_0, 2C_1\left(\frac{C_0}{\be}\right)^2\left[1+\left(\frac{2C_1}{\underline{c}_0(\ts_1+\tG)}\right)^{1/2}\gam_0\right]^2\right\},
    \end{align} 
where $C_0, C_1, \underline{c}_0$ are the universal constants appearing in \eqref{est:error:low:modes}, \eqref{cond:mu:N:sensitivity}, \eqref{est:sensitivity:H1}, respectively, and $\gam_0$ denotes the initial relative error defined by \eqref{def:error:rel}. Fix any $c_1\leq \til{c}$, where $\til{c}$ is the constant appearing in \eqref{cond:mu:N:ng}. AThen assume that $\mu, N$ satisfies \eqref{cond:mu:N:low} with $c=c_0c_1^{-1}$. Then choose $\mu$ such thaty 
    \begin{align}\label{cond:mu:N:low:bound}
        c_0(\ts_1+\tG)\tG^2<\be^2\frac{\mu}{\nu}\leq c_1 \be^2 N^2,
    \end{align}
Observe that $N$ satisfies \eqref{cond:mu:N:low} with $c=c_0c_1^{-1}$. Since $c_0\geq \underline{c}_0$ by choice, it immediately follows that \eqref{cond:mu:N:sensitivity} also holds.

Let $n=1$. Denote $\mathcal{R}_N^{(0)}:=g_0=f_0-f$. Observe that from \eqref{def:Grashof}, \eqref{def:error:rel}, we have
    \begin{align}\label{est:uniform:base}
       \sup_{t\geq t_0}\Sob{\mathcal{R}_N^{(0)}(t)}{L^2}=\gam_0\nu^2\tG.
    \end{align}
Combining \cref{prop:sensitivity}, \eqref{est:error:low:modes} for $n=1$, and \eqref{est:uniform:base} ensures that there exists a relaxation period $\rho_1>0$, for some $t_1\geq t_0$, 
such that
    \begin{align}\label{eq:base:case:low}
    \sup_{t\geq I_1}\Sob{\mathcal{R}_N^{(1)}(t)}{L^2}\leq (2C_1)^{1/2}C_0(\ts_1+\tG)^{1/2}\tG\left(1+\left(\frac{2C_1\nu}{\mu}\right)^{1/2}\gam_0\tG\right)\left(\frac{\nu}{\mu}\right)^{1/2}\sup_{t\geq I_0}\Sob{\mathcal{R}_N^{(0)}(t)}{L^2},
    \end{align}
where $I_1=[t_0+\rho_1,\infty)$. 

Further assume that $\mu$ satisifes
    \begin{align}\label{cond:mu:low}
        \be^2\frac{\mu}{\nu}\geq 2C_1C_0^2\left((\ts_1+\tG)^{1/2}+\left(\frac{2C_1}{\underline{c}_0}\right)^{1/2}\gam_0\right)^2\tG,
    \end{align}
where $C_1$ is the same constant appearing in \eqref{eq:base:case:low}. Then \eqref{eq:base:case:low}, \eqref{cond:mu:N:sensitivity}, and \eqref{cond:mu:low} imply
    \begin{align}\label{est:base:case:low}
        \sup_{t\geq I_1}\Sob{\mathcal{R}_N^{(1)}(t)}{L^2}\leq \be\left(\sup_{t\geq I_0}\Sob{\mathcal{R}_N^{(0)}(t)}{L^2}\right).
    \end{align}
This establishes the base case.

Now suppose that for all $k=1,\dots, n$, there exist relaxation periods $\rho_k$, such that
    \begin{align}\label{eq:induct:hyp:low}
    \sup_{t\geq I_k}\Sob{\mathcal{R}_N^{(k)}(t)}{L^2}\leq \be\sup_{t\geq I_{k-1}}\Sob{\mathcal{R}_N^{(k-1)}(t)}{L^2}.
    \end{align}
With \eqref{est:uniform:base}, it follows that 
    \begin{align}\label{est:bdd:seq}
        \sup_{t\geq I_k}\Sob{\mathcal{R}_N^{(k)}(t)}{L^2}\leq \be^k\gam_0\tG\leq \gam_0\tG,
    \end{align}
for all $k=1,\dots, n$. We may thus deduce from \cref{prop:sensitivity}, \eqref{est:error:low:modes}, and \eqref{est:bdd:seq} that
    \begin{align}\label{est:converge:lowmodes}
        \sup_{t\geq I_{n+1}}\Sob{\mathcal{R}_N^{(n+1)}(t)}{L^2}\leq (2C_1)^{1/2}C_0(\ts_1+\tG)^{1/2}\tG\left(1+\left(\frac{2C_1\nu}{\mu}\right)^{1/2}\gam_0\tG\right)\left(\frac{\nu}{\mu}\right)^{1/2}\sup_{t\geq I_n}\Sob{\mathcal{R}_N^{(n)}(t)}{L^2}.
    \end{align}
An application of \eqref{cond:mu:low} and \eqref{cond:mu:N:sensitivity} then implies
    \begin{align}\notag
     \sup_{t\geq I_{n+1}}\Sob{\mathcal{R}_N^{(n+1)}(t)}{L^2}\leq \be\left(\sup_{t\geq I_n}\Sob{\mathcal{R}_N^{(n)}(t)}{L^2}\right),
    \end{align}
as desired. This completes the induction.
\end{proof}

\begin{Rmk}\label{rmk:relaxation}
From the proof of \cref{thm:low}, we can obtain more informative estimates on the relaxation periods, $\rho_n$. Indeed, by \eqref{def:relaxation} and \eqref{est:uniform:base}, we see that the first relaxation period satisfies
    \begin{align}\label{est:relaxation:base}
        \rho_1\geq \frac{1}{\mu}\ln\left[\left(\frac{\mu}{C_1\nu}\right)\frac{1}{\gam_0^2\tG^2}\left(\frac{\Sob{A^{1/2}(v^0_1-u(t_0))}{L^2}}{\nu}\right)^2\right].
    \end{align}
In subsequent stages, $n\geq1$, by \eqref{def:relaxation} and \eqref{est:sensitivity:H1} applied at the preceding stage, we may deduce that the relaxation periods satisfy
    \begin{align}\label{est:relaxation:n}
        \rho_n\geq \frac{1}{\mu}\ln\left(\frac{\sup_{t\in I_{n-2}}\Sob{\mathcal{R}_N^{(n-2)}(t)}{L^2}}{\sup_{t\in I_{n-1}}\Sob{\mathcal{R}_N^{(n-1)}(t)}{L^2}}\right)\geq -\frac{1}{\mu}\ln\be.
    \end{align}
One may then observe that \eqref{est:relaxation:base} and \eqref{est:relaxation:n} follow a consistent pattern if we allow ourselves to make special, but nevertheless natural choices for initial force ansatz, $f_0$, and the initial data, $v_1^0$, of the first nudged system. Indeed, suppose that the solution, $u$, of \eqref{eq:nse} possesses additional regularity, for instance, $Au(t_0)\in L^2_\s$. Now initialize the nudged system \eqref{eq:step:1} with $v_1^0=P_Nu(t_0)$ and let the initial guess $f_0$ be given by \eqref{def:force:initial}. Then the first relaxation period satisfies
    \begin{align}\label{est:relaxation:refined}
        \rho_1\geq \frac{1}{\mu}\ln\left[\left(\frac{c_1}{C_1}\right)\left(\frac{\nu^2}{\sup_{t\in I_0}\Sob{\mathcal{R}_N^{(0)}(t)}{L^2}}\right)^2\right],
    \end{align}
where we applied the Poincar\'e inequality, the upper bound in \eqref{cond:mu:N:low:bound}, and \eqref{def:Grashof}.
\end{Rmk}

Now we prove a variation on the time-independent case, which allows one to recycle the existing data.  

\begin{proof}[Proof sketch of \cref{thm:recycle}]
We modify the algorithm in \cref{sect:algorithm} as follows: Let $J_0=I_0=[t_0,\infty)$. For convenience, we assume that $t_0=0$. Suppose that $\{P_Nu(t)\}_{t\in J_0}$ is known. At stage $n=1$, we suppose initial data $v_1^0$ is given and solve \eqref{eq:step:1} to produce the solution $v_1$ over $J_0$. We then define \eqref{def:vel:1} over $J_0$ and immediately generate $f_1(t)$ via \eqref{eq:update:1} over $J_0$. To define the stage $1$ approximation to $f$, we evaluate $f_1(t)$ after a transient period of length $\rho_1>0$ to obtain $f_1:=f_1(\rho_1)$.

In subsequent stages $n\geq1$, we generate $v_n$ over $J_0$ via \eqref{eq:step:n}, where $f_{n-1}:=f_{n-1}(\rho_{n-1})$ was generated from the preceding stage, $n-1$. We then define the stage-$n$ approximation to the true state over $J_0$ by $u_n=P_Nu+Q_Nv_n$.  We generate a new force, $f_n(t)$, via the ansatz \eqref{eq:update:n}. The stage-$n$ approximation of $f$ is then given by $f_n(\rho_n)$, for some $\rho_n>0$.

To assess the error, we once again form the synchronization error, $w_n=v_n-u$, and the model error, $g_n(t)=f_n(t)-f$. For each $n\geq1$, the identity \eqref{eq:error:low:modes} still holds. Consequently, \eqref{est:error:low:modes} holds as well. The remaining ingredient to establish convergence is a time-independent force analog of \cref{prop:sensitivity}.

Supposing that \eqref{cond:mu:N:sensitivity} holds with $\tG\mapsto G$, the proof of \cref{prop:sensitivity} proceeds the same way, except that the analysis is carried out entirely over the interval $J_0$, to arrive at the analog of \eqref{est:wn:H1}:
    \begin{align}\label{est:wn:H1:time:independent}
         \left(\frac{\Sob{A^{1/2} w_n(\rho_n)}{L^2}}{\nu}\right)^2
        &\leq e^{-\mu\rho_n}\left(\frac{\Sob{A^{1/2} w_n(0)}{L^2}}{\nu}\right)^2+ C_1\left(\frac{\nu}{\mu}\right)\left(\frac{\Sob{g_{n-1}}{L^2}}{\nu^2}\right)^2.
    \end{align}
One chooses $\rho_n>0$ according to
    \begin{align}\notag
        \rho_n\geq \frac{1}{\mu}\left[\frac{\mu}{C_1\nu}\left(\frac{\nu\Sob{A^{1/2}w_n(0)}{L^2}}{\Sob{g_{n-1}}{L^2}}\right)^2\right],
    \end{align}
so that \eqref{est:wn:H1:time:independent} yields
    \begin{align}
      \left(\frac{\Sob{A^{1/2} w_n(\rho_n)}{L^2}}{\nu}\right)^2
        &\leq 2C_1\left(\frac{\nu}{\mu}\right)\left(\frac{\Sob{g_{n-1}}{L^2}}{\nu^2}\right)^2.\notag
    \end{align}
It is at this time $t=\rho_n$, that we evaluate \eqref{eq:update:n}. 

After these adjustments, it is now clear that the proof of \cref{thm:recycle} follows in analogous manner to the the proof \cref{thm:low}, mutatis mutandis.
\end{proof}

\subsection*{Acknowledgments} The author thanks the reviewer for their helpful comments. The author also graciously acknowledges the generosity and support of the ADAPT group. Support for this project was also provided by the National Science Foundation through NSF-DMS 2213363 and NSF-DMS 2206491, as well as PSC-CUNY Award 65187-00 53, which is jointly funded by The Professional Staff Congress and The City University of New York.

\appendix

\section{Uniform-in-time Estimates for Palenstrophy}\label{sect:app:apriori}

We now provide uniform-in-time estimates in $H^2$ for the reference flow field when the external force field is time-dependent. We begin by establishing an alternative form of the standard enstrophy balance of \eqref{eq:nse} as it is presented in \cref{thm:nse:wellposed}. Indeed, upon taking the $L^2$-inner product of \eqref{eq:nse} with $Au$, we obtain
    \begin{align}\notag
        \frac{1}2\frac{d}{dt}\Sob{A^{1/2}u}{L^2}^2+\nu\Sob{Au}{L^2}^2=\lb f,Au\rb.
    \end{align}
By the Cauchy-Schwarz inequality and \eqref{def:Grashof}, we see that
    \begin{align}\notag
        |\lb f,Au\rb|\leq \nu^3\tG^2+\frac{\nu}{4}\Sob{Au}{L^2}^2.
    \end{align}
By the Poincar\'e inequality, it follows that
    \begin{align}\notag
        \frac{d}{dt}\left(e^{\nu t}\Sob{A^{1/2}u}{L^2}^2\right)+\frac{\nu}2\Sob{Au}{L^2}^2\leq 2\nu^3\tG^2e^{\nu t}.
    \end{align}
Integrating over $t\geq t_0$ yields
    \begin{align}\notag
        \Sob{A^{1/2}u(t)}{L^2}^2+\frac{\nu}2\int_{t_0}^te^{-\nu(t-s)}\Sob{Au(s)}{L^2}^2\leq \Sob{A^{1/2}u(t_0)}{L^2}^2e^{-\nu(t-t_0)}+2\nu^2\tG^2(1-e^{-\nu(t-t_0)}),
    \end{align}
which is \eqref{est:nse:H1}, as desired. Thus, for $u_0\in\til{B}_1$, we may deduce
    \begin{align}\label{est:enstrophy:bound}
        \Sob{A^{1/2}u(t)}{L^2}^2+\frac{\nu}2\int_{t_0}^te^{-\nu(t-s)}\Sob{Au(s)}{L^2}^2\leq 2\nu^2\tG^2,
    \end{align}
for all $t\geq t_0\geq0$.

\begin{proof}[Proof of \eqref{def:rad:H2:gen}]
Upon taking the $L^2$-inner product of \eqref{eq:nse} with $A^2u$, we obtain
    \begin{align}
        \frac{1}2\frac{d}{dt}\Sob{Au}{L^2}^2+\nu\Sob{A^{3/2}u}{L^2}^2&=-\lb B(u,u),A^2u\rb+\lb f,A^2u\rb=I+II.\notag
    \end{align}
Observe that integration by parts multiple times yields
    \begin{align}
        I
        &=-\lb B(Au,u),Au\rb-2\sum_{\ell=1,2}\lb B(\bdy_\ell u,\bdy_\ell u),Au\rb\notag,
    \end{align}
where we applied the identity $\lb B(u,v),v\rb=0$. It then follows from H\"older's inequality, interpolation, Young's inequality, and \eqref{est:nse:H1} that
    \begin{align}
        |I|&\leq 3\Sob{A^{1/2}u}{L^2}\Sob{Au}{L^4}^2\leq 3c_L\Sob{A^{1/2} u}{L^2}\Sob{Au}{L^2}\Sob{A^{3/2}u}{L^2}\leq \frac{\nu}{8}\Sob{A^{3/2}u}{L^2}^2+36c_L^2\nu\tG^2\Sob{Au}{L^2}^2,\notag
    \end{align}
where $c_L\geq1$ is the associated constant of interpolation, i.e., $\Sob{Au}{L^4}^2\leq c_L\Sob{Au}{L^2}\Sob{A^{3/2}u}{L^2}$ On the other hand, we treat $II$ with integration by parts and the Cauchy-Schwarz inequality to obtain
    \begin{align}
        |II|&\leq|\lb A^{1/2} f,A^{3/2} u\rb|\leq 2\nu^3\left(\frac{\Sob{A^{1/2} f}{L^\infty_tL^2_x}}{\nu^2}\right)^2+\frac{\nu}{8}\Sob{A^{3/2}u}{L^2}^2\leq 2\nu^3\ts_1^2\tG^2+\frac{\nu}{8}\Sob{A^{3/2}u}{L^2}^2.\notag
    \end{align}
We now combine the estimates $I,II$ and apply Poincar\'e's inequality to arrive at
    \begin{align}
        \frac{d}{dt}\Sob{Au}{L^2}^2+\nu\Sob{Au}{L^2}^2+\frac{\nu}2\Sob{A^{3/2}u}{L^2}^2\leq 36c_L^2\nu\tG^2\Sob{Au}{L^2}^2+2\nu^3\ts_1^2\tG^2.\notag
    \end{align}
An application of Gronwall's inequality yields
    \begin{align}\notag
        \Sob{Au(t)}{L^2}^2+&\frac{\nu}2\int_{t_0}^te^{-\nu(t-s)}\Sob{A^{3/2}u(s)}{L^2}^2ds
        \notag\\
        &\leq \Sob{Au(t_0)}{L^2}^2e^{-\nu(t-t_0)}+36c_L^2\nu\tG^2\int_{t_0}^te^{-\nu(t-s)}\Sob{Au(s)}{L^2}^2ds+2\nu^2\ts_1^2\tG^2(1-e^{-\nu(t-t_0)}),\notag
    \end{align}
for all $t \geq t_0\geq0$. We now apply \eqref{est:enstrophy:bound} to bound
    \begin{align}\notag
        \Sob{Au(t)}{L^2}^2+\frac{\nu}2\int_{t_0}^te^{-\nu(t-s)}\Sob{A^{3/2}u(s)}{L^2}^2ds
        \leq \Sob{Au(t_0)}{L^2}^2e^{-\nu(t-t_0)}+\til{c}_2^2\nu^2(\ts_1+\tG)^2\tG^2,
    \end{align}
where $\til{c}_2=12c_L$, which holds for all $t\geq t_0\geq0$. 

Therefore, if $\Sob{Au_0}{L^2}\leq \al\til{c}_2\nu(\ts_1+\tG)\tG$, for any $\al>0$, then
    \begin{align}\notag
        \Sob{Au(t)}{L^2}^2\leq \til{c}_2^2(1+\al^2)\nu^2(\ts_1+\tG)^2\tG^2,
    \end{align}
for all $t\geq0$. This completes the proof.
\end{proof}

\newcommand{\etalchar}[1]{$^{#1}$}
\providecommand{\bysame}{\leavevmode\hbox to3em{\hrulefill}\thinspace}
\providecommand{\MR}{\relax\ifhmode\unskip\space\fi MR }
\providecommand{\MRhref}[2]{%
  \href{http://www.ams.org/mathscinet-getitem?mr=#1}{#2}
}

\vfill 

\noindent Vincent R. Martinez\\
{\footnotesize
Department of Mathematics \& Statistics\\
CUNY Hunter College \\
Department of Mathematics \\
CUNY Graduate Center \\
Web: \url{http://math.hunter.cuny.edu/vmartine/}\\
Email: \url{vrmartinez@hunter.cuny.edu}\\
}


\begin{thebibliography}{FGHM{\etalchar{+}}20}

\bibitem[AB18]{AlbanezBenvenutti2018}
D.A.F. Albanez and M.J. Benvenutti, \emph{Continuous data assimilation
  algorithm for simplified bardina model}, Evol. Equ. Control Theory \textbf{7}
  (2018), no.~1, 33--52.

\bibitem[ANLT16]{AlbanezNussenzveigLopesTiti2016}
D.A.F. Albanez, H.J. Nussenzveig~Lopes, and E.S. Titi, \emph{Continuous data
  assimilation for the three-dimensional {N}avier--{S}tokes-$\alpha$ model},
  Asymptotic Anal. \textbf{97} (2016), no.~1-2, 139--164.

\bibitem[AOT14]{AzouaniOlsonTiti2014}
A~Azouani, E.J. Olson, and E.S. Titi, \emph{Continuous data assimilation using
  general interpolant observables}, J. Nonlinear Sci. \textbf{24} (2014),
  no.~2, 277--304.

\bibitem[ATG{\etalchar{+}}17]{AltafTitiGebraelKnioZhaoMcCabe2017}
M.U. Altaf, E.S. Titi, T.~Gebrael, O.M. Knio, L.~Zhao, and M.F. McCabe,
  \emph{Downscaling the 2d {B}{\'e}nard convection equations using continuous
  data assimilation}, Comput. Geosci. \textbf{21} (2017), 393--410.

\bibitem[BBM22]{BiswasBrownMartinez2022}
A.~Biswas, K.R. Brown, and V.R. Martinez, \emph{Higher-order synchronization
  and a refined paradigm for global interpolant observables}, Ann. Appl. Math.
  \textbf{38} (2022), no.~3, pp. 1--60.

\bibitem[BFJ10]{Balci_Foias_Jolly_2010}
N.~Balci, C.~Foias, and M.~S. Jolly, \emph{2-{D} turbulence for forcing in all
  scales}, J. Math. Pures Appl. (9) \textbf{94} (2010), no.~1, 1--32.
  \MR{2653977 (2011m:37144)}

\bibitem[BIL06]{BerselliIliescuLaytonBook}
L.~C. Berselli, T.~Iliescu, and W.~J. Layton, \emph{Mathematics of large eddy
  simulation of turbulent flows}, Scientific Computation, Springer-Verlag,
  Berlin, 2006. \MR{2185509}

\bibitem[BLSZ13]{BlomkerLawStuartZygalakis2013}
D.~Bl\"{o}mker, K.~Law, A.~M. Stuart, and K.~C. Zygalakis, \emph{Accuracy and
  stability of the continuous-time 3{DVAR} filter for the {N}avier-{S}tokes
  equation}, Nonlinearity \textbf{26} (2013), no.~8, 2193--2219. \MR{3078113}

\bibitem[BM17]{BiswasMartinez2017}
A.~Biswas and V.R. Martinez, \emph{Higher-order synchronization for a data
  assimilation algorithm for the 2d {N}avier-{S}tokes equations}, Nonlinear
  Anal. Real World Appl. \textbf{35} (2017), no.~1, 132--157.

\bibitem[BMO18]{BlocherMartinezOlson2018}
Jordan Blocher, Vincent~R. Martinez, and Eric Olson, \emph{Data assimilation
  using noisy time-averaged measurements}, Physica D: Nonlinear Phenomena
  \textbf{376-377} (2018), 49--59, Special Issue: Nonlinear Partial
  Differential Equations in Mathematical Fluid Dynamics.

\bibitem[BOT15]{BessaihOlsonTiti2015}
H.~Bessaih, E.J. Olson, and E.S. Titi, \emph{Continuous data assimilation with
  stochastically noisy data}, J. Nonlinear Sci. \textbf{28} (2015), no.~0,
  729--753.

\bibitem[BP21]{BiswasPrice2021}
A.~Biswas and R.~Price, \emph{Continuous data assimilation for the
  three-dimensional navier--stokes equations}, SIAM Journal on Mathematical
  Analysis \textbf{53} (2021), no.~6, 6697--6723.

\bibitem[BH23]{BiswasHudson2023}
A.~Biswas and J.~Hudson, \emph{Determining the viscosity of the Navier–Stokes equations from observations of finitely many modes}, Inverse Problems \textbf{39} (2023), no.~12, 125012.


\bibitem[CD19]{CheskidovDai2017}
A.~Cheskidov and M.~Dai, \emph{Kolmogorov's dissipation number and the number
  of degrees of freedom for the 3{D} navier-stokes equations}, Proc. R. Soc.
  Edinburgh \textbf{149} (2019), 429--446.

\bibitem[CDLMB18]{DiLeoniClarkMazzinoBiferale2018}
P.~Clark Di~Leoni, A.~Mazzino, and L.~Biferale, \emph{Inferring flow parameters
  and turbulent configuration with physics-informed data assimilation and
  spectral nudging}, Phys. Rev. Fluids \textbf{3} (2018), no.~10, 104604.

\bibitem[CDLMB20]{ClarkDiLeoniMazzinoBiferale2020}
\bysame, \emph{Synchronization to big data: Nudging the navier-stokes equations
  for data assimilation of turbulent flows}, Phys. Rev. X \textbf{10} (2020),
  011023.

\bibitem[CF88]{ConstantinFoias1988}
P.~Constantin and C.~Foias, \emph{{N}avier-{S}tokes equations}, Chicago
  Lectures in Mathematics, University of Chicago Press, Chicago, IL, 1988.
  \MR{972259 (90b:35190)}

\bibitem[CGH11]{CialencoGlattHoltz2011}
I.~Cialenco and N.~Glatt-Holtz, \emph{Parameter estimation for the
  stochastically perturbed {N}avier-{S}tokes equations}, Stochastic Process.
  Appl. \textbf{121} (2011), no.~4, 701--724. \MR{2770904}

\bibitem[CHL20]{CarlsonHudsonLarios2020}
E.~Carlson, J.~Hudson, and A.~Larios, \emph{Parameter recovery for the 2
  dimensional {N}avier-{S}tokes equations via continuous data assimilation},
  SIAM J. Sci. Comput. \textbf{42} (2020), no.~1, A250--A270. \MR{4052412}

\bibitem[CHL{\etalchar{+}}22]{CarlsonHudsonLariosMartinezNgWhitehead2021}
E.~Carlson, J.~Hudson, A.~Larios, V.R. Martinez, E.~Ng, and J.P. Whitehead,
  \emph{Dynamically learning the parameters of a chaotic system using partial
  observations}, Discrete and Continuous Dynamical Systems \textbf{42} (2022),
  no.~8, 3809--3839.

\bibitem[CJT95]{CockburnJonesTiti1995}
B.~Cockburn, D.A. Jones, and E.S. Titi, \emph{Determining degrees of freedom
  for nonlinear dissipative equations}, C.R. Acad. Sci. Paris S\'er. I Math
  \textbf{321} (1995), 563--568.

\bibitem[CJT97]{CockburnJonesTiti1997}
\bysame, \emph{Estimating the number of asymptotic degrees of freedom for
  nonlinear dissipative systems}, Math. Comput. \textbf{66} (1997), 1073--1087.

\bibitem[CJTW21]{CaoJollyTitiWhitehead2021}
Y.~Cao, M.S. Jolly, E.S. Titi, and J.P. Whitehead, \emph{Algebraic bounds on
  the rayleigh{\textendash}b{\'{e}}nard attractor}, Nonlinearity \textbf{34}
  (2021), no.~1, 509--531.

\bibitem[CL21]{CarlsonLarios2021}
E.~Carlson and A.~Larios, \emph{Sensitivity analysis for the 2d navier--stokes
  equations with applications to continuous data assimilation}, Journal of
  Nonlinear Science \textbf{31} (2021), no.~5, 84.

\bibitem[CLL21]{ChenLiLunasin2021}
N.~Chen, Y.~Li, and E.~Lunasin, \emph{An efficient continuous data assimilation
  algorithm for the sabra shell model of turbulence}, Chaos: An
  Interdisciplinary Journal of Nonlinear Science \textbf{31} (2021), no.~10,
  103123.

\bibitem[COT19]{CelikOlsonTiti2019}
E.~Celik, E.~Olson, and E.S. Titi, \emph{Spectral filtering of interpolant
  observables for a discrete-in-time downscaling data assimilation algorithm},
  SIAM J. Appl. Dyn. Syst. \textbf{18} (2019), no.~2, 1118--1142. \MR{3959540}

\bibitem[DDL{\etalchar{+}}19]{DesamsettiDasariLangodanTitiKnioHoteit2019}
S.~Desamsetti, H.P. Dasari, S.~Langodan, E.S. Titi, O.~Knio, and I.~Hoteit,
  \emph{Efficient dynamical downscaling of general circulation models using
  continuous data assimilation}, Quarterly Journal of the Royal Meteorological
  Society \textbf{145} (2019), no.~724, 3175--3194.

\bibitem[DFJ05]{DascaliucFoiasJolly2005}
R.~Dascaliuc, C.~Foias, and M.~S. Jolly, \emph{Relations between energy and
  enstrophy on the global attractor of the 2-{D} {N}avier-{S}tokes equations},
  J. Dynam. Differential Equations \textbf{17} (2005), no.~4, 643--736.
  \MR{2189974}

\bibitem[DFJ08]{DascaliucFoiasJolly2008}
\bysame, \emph{Some specific mathematical constraints on 2{D} turbulence},
  Physica D \textbf{237} (2008), no.~23, 3020--3029. \MR{2514925 (2010d:37166)}

\bibitem[DFJ09]{DascaliucFoiasJolly2009}
R.~Dascaliuc, C.~Foias, and M.S. Jolly, \emph{On the asymptotic behavior of
  average energy and enstrophy in 3d turbulent flows}, Physica D: Nonlinear
  Phenomena \textbf{238} (2009), no.~7, 725--736.

\bibitem[DR22]{DiegelRebholz2022}
A.E. Diegel and L.G. Rebholz, \emph{Continuous data assimilation and long-time
  accuracy in a c0 interior penalty method for the cahn-hilliard equation},
  Applied Mathematics and Computation \textbf{424} (2022), 127042.

\bibitem[FET17]{FarhatLunasinTiti2017b}
A.~Farhat, Lunasin E., and E.S. Titi, \emph{A data assimilation algorithm: the
  paradigm of the 3d leray-$\alpha$ model of turbulence}, Nonlinear Partial
  Differential Equations Arising from Geometry and Physics. London Mathematical
  Society. Lecture Notes Series, Cambridge University Press, 2017.

\bibitem[FGHM{\etalchar{+}}20]{FarhatGlattHoltzMartinezMcQuarrieWhitehead2020}
A.~Farhat, N.~E. Glatt-Holtz, V.~R. Martinez, S.~A. McQuarrie, and J.~P.
  Whitehead, \emph{Data assimilation in large {P}randtl {R}ayleigh-{B}\'{e}nard
  convection from thermal measurements}, SIAM J. Appl. Dyn. Syst. \textbf{19}
  (2020), no.~1, 510--540. \MR{4065631}

\bibitem[FJJT18]{FarhatJohnstonJollyTiti2018}
A.~Farhat, H.~Johnston, M.S. Jolly, and E.S. Titi, \emph{Assimilation of nearly
  turbulent {R}ayleigh-{B}\'enard flow through vorticity or local circulation
  measurements: A computational study}, J. Sci. Comput. (2018), 1--15.

\bibitem[FJKT12]{FoiasJollyKravchenkoTiti2012}
C.~Foias, M.S. Jolly, R.~Kravchenko, and E.S. Titi, \emph{A determining form
  for the two-dimensional {N}avier-{S}tokes equations: the {F}ourier modes
  case}, J. Math. Phys. \textbf{53} (2012), no.~11, 115623, 30. \MR{3026568}

\bibitem[FJLT17]{FoiasJollyLithioTiti2017}
C.~Foias, M.S. Jolly, D.~Lithio, and E.S. Titi, \emph{One-dimensional
  parametric determining form for the two-dimensional {N}avier-{S}tokes
  equations}, J. Nonlinear Sci. \textbf{27} (2017), no.~5, 1513--1529.
  \MR{3707345}

\bibitem[FJT15]{FarhatJollyTiti2015}
A.~Farhat, M.S. Jolly, and E.S. Titi, \emph{Continuous data assimilation for
  the 2d {B}{\'e}nard convection through velocity measurements alone}, Phys. D
  \textbf{303} (2015), 59--66.

\bibitem[FLT16a]{FarhatLunasinTiti2016b}
A.~Farhat, E.~Lunasin, and E.S. Titi, \emph{Data assimilation algorithm for 3d
  {B}{\'e}nard convection in porous media employing only temperature
  measurements}, J. Math. Anal. Appl. \textbf{438} (2016), no.~1, 492--506.

\bibitem[FLT16b]{FarhatLunasinTiti2016c}
\bysame, \emph{On the {C}harney conjecture of data assimilation employing
  temperature measurements alone: The paradigm of 3d planetary geostrophic
  model}, Math. of Climate and Wea. Forecasting \textbf{2} (2016), 61--74.

\bibitem[FLT17]{FarhatLunasinTiti2017a}
\bysame, \emph{Continuous data assimilation for a 2d {B}{\'e}nard convection
  system through horizontal velocity measurements alone}, J. Nonlinear Sci.
  \textbf{27} (2017), 1065--1087.

\bibitem[FMRT01]{FoiasManleyRosaTemamBook2001}
C.~Foias, O.~Manley, R.~Rosa, and R.~Temam, \emph{{N}avier-{S}tokes equations
  and turbulence}, Encyclopedia of Mathematics and its Applications, vol.~83,
  Cambridge University Press, Cambridge, 2001. \MR{1855030 (2003a:76001)}

\bibitem[FMT16]{FoiasMondainiTiti2016}
C.~Foias, C.~Mondaini, and E.S. Titi, \emph{A discrete data assimilation scheme
  for the solutions of the 2d {N}avier-{S}tokes equations and their
  statistics}, SIAM J. Appl. Dyn. Syst. \textbf{15} (2016), no.~4, 2019--2142.

\bibitem[FNST88]{FoiasNicolaenkoSellTemam1988}
C.~Foias, B.~Nicolaenko, G.~R. Sell, and R.~Temam, \emph{Inertial manifolds for
  the {K}uramoto-{S}ivashinsky equation and an estimate of their lowest
  dimension}, J. Math. Pures Appl. (9) \textbf{67} (1988), no.~3, 197--226.
  \MR{964170}

\bibitem[FP67]{FoiasProdi1967}
C.~Foia{\c{s}} and G.~Prodi, \emph{Sur le comportement global des solutions
  non-stationnaires des {\'e}quations de {N}avier-{S}tokes en dimension {$2$}},
  Rend. Sem. Mat. Univ. Padova \textbf{39} (1967), 1--34. \MR{0223716 (36
  \#6764)}

\bibitem[FST88]{FoiasSellTemam1988}
C.~Foias, G.R. Sell, and T.~Temam, \emph{Inertial manifolds for nonlinear
  evolutionary equations}, J. Differential Equations \textbf{73} (1988), no.~2,
  309--353. \MR{943945}

\bibitem[FT84]{FoiasTemam1984}
C.~Foias and R.~Temam, \emph{Determination of the solutions of the
  {N}avier-{S}tokes equations by a set of nodal values}, Math. Comput.
  \textbf{43} (1984), no.~167, 117--133.

\bibitem[FT87]{FoiasTemam1985}
\bysame, \emph{The connection between the {N}avier-{S}tokes equations,
  dynamical systems, and turbulence theory}, Directions in partial differential
  equations ({M}adison, {WI}, 1985), Publ. Math. Res. Center Univ. Wisconsin,
  vol.~54, Academic Press, Boston, MA, 1987, pp.~55--73. \MR{1013833}

\bibitem[GAN20]{Garcia-ArchillaNovo2020}
B.~Garc{\'\i}a-Archilla and J.~Novo, \emph{Error analysis of fully discrete
  mixed finite element data assimilation schemes for the navier-stokes
  equations}, Advances in Computational Mathematics \textbf{46} (2020), no.~4,
  61.

\bibitem[GANT20]{Garcia-ArchillaNovoTiti2020}
B.~Garc{\'\i}a-Archilla, J.~Novo, and E.S. Titi, \emph{Uniform in time error
  estimates for a finite element method applied to a downscaling data
  assimilation algorithm for the navier--stokes equations}, SIAM Journal on
  Numerical Analysis \textbf{58} (2020), no.~1, 410--429.

\bibitem[GOT16]{GeshoOlsonTiti2016}
M.~Gesho, E.J. Olson, and E.S. Titi, \emph{A computational study of a data
  assimilation algorithm for the two-dimensional {N}avier-{S}tokes equations},
  Commun. Comput. Phys. \textbf{19} (2016), no.~4, 1094--1110.

\bibitem[IMT19]{IbdahMondainiTiti2019}
H.A. Ibdah, C.F. Mondaini, and E.S. Titi, \emph{{Fully discrete numerical
  schemes of a data assimilation algorithm: uniform-in-time error estimates}},
  IMA Journal of Numerical Analysis \textbf{40} (2019), no.~4, 2584--2625.

\bibitem[JMOT19]{JollyMartinezOlsonTiti2019}
M.S. Jolly, V.R. Martinez, E.J. Olson, and E.S. Titi, \emph{Continuous data
  assimilation with blurred-in-time measurements of the surface
  quasi-geostrophic equation}, Chin. Ann. Math. Ser. B \textbf{40} (2019),
  721–--764.

\bibitem[JMT17]{JollyMartinezTiti2017}
M.S. Jolly, V.R. Martinez, and E.S. Titi, \emph{A data assimilation algorithm
  for the 2d subcritical surface quasi-geostrophic equation}, Adv. Nonlinear
  Stud. \textbf{35} (2017), 167--192.

\bibitem[JST15]{JollySadigovTiti2015}
M.S. Jolly, T.~Sadigov, and E.S. Titi, \emph{A determining form for the damped
  driven nonlinear schr\"odinger equation--fourier modes case}, J. Differential
  Equations \textbf{258} (2015), 2711--2744.

\bibitem[JST17]{JollySadigovTiti2017}
\bysame, \emph{Determining form and data assimilation algorithm for weakly
  damped and driven korteweg-de vries equation- fourier modes case}, Nonlinear
  Anal. Real World Appl. \textbf{36} (2017), 287--317.

\bibitem[JT92a]{JonesTiti1992b}
D.A. Jones and E.S. Titi, \emph{Determining finite volume elements for the 2d
  navier-stokes equations}, Phys. D \textbf{60} (1992), 165--174.

\bibitem[JT92b]{JonesTiti1992a}
\bysame, \emph{On the number of determining nodes for the 2d {N}avier-{S}tokes
  equations}, J. Math. Anal. \textbf{168} (1992), 72--88.

\bibitem[LP20]{LariosPei2020}
A.~Larios and Y.~Pei, \emph{Approximate continuous data assimilation of the
  2{D} {N}avier-{S}tokes equations via the {V}oigt-regularization with
  observable data}, Evol. Equ. Control Theory \textbf{9} (2020), no.~3,
  733--751. \MR{4128433}

\bibitem[LRZ19]{LariosRebholzZerfas2019}
A.~Larios, L.G. Rebholz, and C.~Zerfas, \emph{Global in time stability and
  accuracy of {IMEX}-{FEM} data assimilation schemes for {N}avier-{S}tokes
  equations}, Comput. Methods Appl. Mech. Engrg. \textbf{345} (2019),
  1077--1093. \MR{3912985}

\bibitem[Mar86]{Marchioro1986}
C.~Marchioro, \emph{An example of absence of turbulence for any {R}eynolds
  number. {I}}, Comm. Math. Phys. \textbf{105} (1986), 99--106.

\bibitem[Mar22]{Martinez2022}
V.R. Martinez, \emph{Convergence analysis of a viscosity parameter recovery
  algorithm for the 2{D} {N}avier-{S}tokes equations}, Nonlinearity \textbf{35}
  (2022), no.~5, 2241--2287. \MR{4420613}

\bibitem[MT18]{MondainiTiti2018}
C.F. Mondaini and E.S. Titi, \emph{Uniform-in-time error estimates for the
  postprocessing galerkin method applied to a data assimilation algorithm},
  SIAM J. Numer. Anal. \textbf{56} (2018), no.~1, 78--110.

\bibitem[Pei19]{Pei2019}
Y.~Pei, \emph{Continuous data assimilation for the 3d primitive equations of
  the ocean}, Communications on Pure and Applied Analysis \textbf{18} (2019),
  no.~2, 643--661.

\bibitem[PWM21]{PachevWhiteheadMcQuarrie2021}
B.~Pachev, J.P. Whitehead, and S.A. McQuarrie, \emph{Concurrent multi-parameter
  learning demonstrated on the kuramoto-sivashinsky equation}, 2021.

\bibitem[Tem97]{TemamBook1997}
R.~Temam, \emph{Infinite-dimensional dynamical systems in mechanics and
  physics}, second ed., Applied Mathematical Sciences, vol.~68,
  Springer-Verlag, New York, 1997. \MR{1441312}

\bibitem[Tem01]{TemamBook2001}
\bysame, \emph{{N}avier-{S}tokes equations: Theory and numerical analysis}, AMS
  Chelsea Publishing, Providence, RI, 2001, Reprint of the 1984 edition.
  \MR{1846644 (2002j:76001)}

\bibitem[YGJP22]{CaoGiorginiJollyPakzad2022}
C.~Yu, A.~Giorgini, M.S. Jolly, and A.~Pakzad, \emph{Continuous data
  assimilation for the 3d ladyzhenskaya model: analysis and computations},
  Nonlinear Analysis: Real World Applications \textbf{68} (2022), 103659.

\bibitem[ZMML22]{ZaunerMonsMarquetLeclaire2022}
M.~Zauner, V.~Mons, O.~Marquet, and B.~Leclaire, \emph{Nudging-based data
  assimilation of the turbulent flow around a square cylinder}, Journal of
  Fluid Mechanics \textbf{937} (2022), A38.

\end{thebibliography}
\end{document}